\documentclass[12pt]{amsart}

\usepackage{amsmath, amsthm}
\usepackage{graphicx,epsfig}
\usepackage{amsmath,latexsym,amssymb,verbatim}

\newtheorem{definition}{Definition}
\newtheorem{lemma}{Lemma}
\newtheorem{proposition}{Proposition}
\newtheorem{theorem}{Theorem}
\newtheorem{corollary}{Corollary}
\newtheorem{remark}{Remark}
\newtheorem{example}{Example}

\def\sideremark#1{\ifvmode\leavevmode\fi\vadjust{\vbox to0pt{\vss 
      \hbox to 0pt{\hskip\hsize\hskip1em           
 \vbox{\hsize2cm\tiny\raggedright\pretolerance10000
 \noindent #1\hfill}\hss}\vbox to8pt{\vfil}\vss}}}%

                                                   %

\newcommand{\R}{\mathbb{R}}

\newcommand{\N}{\mathbb{N}}

\newcommand{\I}{\mathcal{I}}

\newcommand{\Diff}{\text{Diff\ }}

\begin{document}
\title[Multiplicity and $\varepsilon$- neighborhoods]{Multiplicity  of fixed points\\ and \\  growth of 
$\varepsilon$-neighborhoods of orbits }
\author{Pavao Marde\v si\' c$^1$, Maja Resman$^2$, Vesna \v Zupanovi\' c$^2$}
\date{}
\let\thefootnote\relax\footnotetext{This article was supported by the Franco-Croatian PHC-COGITO project 24710UJ M}

 \address{$^1$Universit\' e de Bourgogne, Department de Math\' ematiques, Institut de Math\' ematiques de Bourgogne, B.P. 47 870-21078-Dijon Cedex, France}
 \address{$^2$University of Zagreb, Department of Applied Mathematics, Faculty of Electrical Engineering and Computing, Unska 3, 10000 Zagreb, Croatia}
\email{mardesic@u-bourgogne.fr, 
maja.resman@fer.hr,\newline vesna.zupanovic@fer.hr}
\begin{abstract}
We study the relationship between the multiplicity of a fixed point of a function $g$, and the dependence on $\varepsilon$ of  the length of $\varepsilon$-neighborhood of any orbit of $g$, tending to the fixed point.
The relationship between these two notions was discovered in Elezovi\' c, \v Zubrini\' c, \v Zupanovi\' c \cite{elezovic} in the differentiable case, and related to the box dimension of the orbit. 

Here, we generalize these results to non-differentiable cases introducing a new notion of critical Minkowski order. 
We study the space of functions having a development in a Chebyshev scale and use multiplicity with respect to this space of functions.  
With the new definition, we recover the relationship between multiplicity of fixed points and the dependence on $\varepsilon$ of the length of $\varepsilon$-neighborhoods of orbits in non-differentiable cases. 

Applications include in particular Poincar\' e maps near homoclinic loops and hyperbolic 2-cycles, and Abelian integrals. 
This is a new approach to estimate the cyclicity, by computing the length of the $\varepsilon$-neighborhood of one orbit of the Poincar\' e map (for example numerically), and by comparing it to the appropriate scale.
\end{abstract}

\maketitle

Keywords: limit cycles, multiplicity, cyclicity, Chebyshev scale, critical Minkowski order, box dimension,  homoclinic loop

MSC 2010: 37G15, 34C05, 28A75, 34C10

\bigskip

\section{Introduction}\label{sec1}

The multiplicity of a fixed point of a differentiable function can be seen from the density of its orbit near the fixed point as was shown in \cite{elezovic}. We recall this result in Theorem~\ref{neveda}. The information on the density is contained in the behavior of $\varepsilon-$neighborhood of the orbit near the fixed point and is usually measured by the box dimension of the orbit. It was further noted in \cite{belg} that the box dimension of the orbit of Poincar\' e map around a focus or a limit cycle shows how many limit cycles can appear in bifurcations. This gave an application of the result from Theorem~\ref{neveda} to continuous dynamical systems.  

The idea of this article is to generalize these results to a class of functions which are non-differentiable at a fixed point. The goal is again to estimate the multiplicity of a fixed point of such a function only from the asymptotic behavior of the length of the $\varepsilon$-neighborhood of any of its orbits close to the fixed point,  as $\varepsilon\to 0$. The results can be applied to continuous dynamical systems. While differentiable functions described above appear as displacement functions near limit cycles and foci, non-differentiable functions appear naturally as displacement functions near polycycles, see e.g. \cite{IY}, \cite{roussarie} (see Section \ref{sec4}). The multiplicity of a fixed point 0 of the displacement function near some limit periodic set reveals the number of limit cycles that appear in the unfoldings of the limit periodic set. It is of interest to find at least an upper bound on the multiplicity. 

Calculating numerically an orbit of the Poincar\' e map of the limit periodic set, one can estimate the length of its $\varepsilon$-neighborhood for small values of $\varepsilon$ and thus estimate its asymptotic behavior. 

In the differentiable case (foci, limit cycles), see Theorem~\ref{neveda}, it suffices to compare the behavior of the length with discrete scale of powers, $\{\varepsilon,\ \varepsilon^{1/2}, \varepsilon^{1/3},\ldots\}$. The moment when comparability occurs reveals multiplicity, i.e. cyclicity. We see additionally that this moment is signaled by the limit capacity (the box dimension) of the orbit which actually shows the density of the orbit around the fixed point: the bigger this density is, more limit cycles can appear in perturbations.

In non-differentiable cases, however, we show that it is not sufficient to compare the length of the $\varepsilon$-neighborhood with the scale of powers to estimate multiplicity and cyclicity. The idea behind Theorems~\ref{gensaus}~and~\ref{chebsaus} is in finding the appropriate scale to which the length should be compared to obtain precise information on the multiplicity. This scale, as we will see, depends on the unfolding and should be estimated at least from above. Here, instead of box dimension, the new notion of critical Minkowski order is introduced, to signal the moment when  the comparability occurs in the new scale.

\vspace{0.2cm}
The article is organized as follows.
First, in Subsection 1.1 we recall the connection from \cite{elezovic} between the box dimension of the orbit and the multiplicity of the fixed point in the differentiable case, see Theorem~\ref{neveda}. In Subsection 1.2 we recall and introduce definitions and notions we need in non-differentiable cases. Finally, in Section~\ref{sec2}, we state our main results concerning non-differentiable cases, see Theorem~\ref{gensaus} and Theorem~\ref{chebsaus}. Some applications to continuous dynamical systems are given in Section~\ref{sec4}.

\subsection{Differentiable case.}\

Denote ${{ {\Diff}}}^r[0,d)$ the space of  $C^r$-differentiable functions on $[0,d)$, for $r$ sufficiently big, $d>0$. Let  $f\in \Diff^r[0,d)$, $f(0)=0$ and\linebreak $x>f(x)>0$, 
for $x\in(0,d)$. Put
\begin{equation}\label{g}g=id-f
\end{equation}
and consider the orbit $S^g(x_0)$ of $0<x_0<d$ by $g$:
\begin{equation}\label{orbit}
S^g(x_0)=\{x_n\ |\ n\in\mathbb{N}\},  \quad x_{n+1}=g(x_n). 
\end{equation}

Let $\mu_0^{fix}(g)$  be \emph{the multiplicity of $0$ as a fixed point} of the function $g$ in the family ${{ {\Diff}}}^r[0,d)$. That is, the number of fixed points that can bifurcate from $0$ by bifurcations within ${{ {\Diff}}}^r[0,d)$. Then, 
\begin{equation}\label{mul}
\mu_0^{fix}(g)=k, \quad \text{if} \quad f(0)=f'(0)=\ldots=f^{(k-1)}(0)=0,\ f^{(k)}(0)\neq 0,
\end{equation}
i.e., $0$ is a zero of multiplicity $\mu_0(f)=k$ of $f$.


Now we define the Minkowski content and the box dimension of a bounded set. Let $U\subset\mathbb{R}^N$ be a bounded set. Denote by $|A_{\varepsilon}(U)|$ the Lebesgue measure of  $\varepsilon-$neighborhood of $U$.

By {\it lower and upper $s$-dimensional  Minkowski content of $U$}, $s\ge0$, we mean

$$
{\mathcal M}_{*}^{s} (U)=\liminf_{{\varepsilon}\to0}\frac{|A_{\varepsilon}(U)|}{{\varepsilon}^{N-s}}\text{\ \  and\ \ }{\mathcal M}^{*s} (U)=\limsup_{{\varepsilon}\to0}\frac{|A_{\varepsilon}(U)|}{{\varepsilon}^{N-s}}
$$
respectively. Furthermore, \emph{lower and upper box dimension 
 of $U$} 
are defined by
$$
\underline{\dim}_B U=\inf\{s\ge0\  |\ {\mathcal M}_{*}^s(U)=0\},\ \overline{\dim}_B U=\inf\{s\ge0\  |\ {\mathcal M}^{*s}(U)=0\}.
$$
As functions of $s\in[0,N]$, $\mathcal{M}^{*s}(U)$ and $\mathcal{M}_*^s(U)$ are step functions that jump only once from $+\infty$ to zero as $s$ grows, and upper and lower box dimension contain information on jump in upper and lower content respectively.\\
If $\underline{\dim}_B U=\overline{\dim}_B U$, then we put $\dim_B(U)=\underline{\dim}_B U=\overline{\dim}_B U$ and call it the \emph{box dimension of $U$}. In the literature, upper box dimension (also called \emph{limit capacity})
has been widely used.
For more details on box dimension, see Falconer \cite{falconer} or Tricot \cite{tricot}.

\medskip

Our case is  $1$-dimensional  so $N=1$ in the definition of box dimension, and also $U\subset [0,d)$, where $d>0$. We are interested in measuring the density of accumulation of the orbit of a function $g$ near its fixed point zero.
Let $g$ be sufficiently differentiable on $[0,d)$, $d>0$, such that $g(0)=0$, 
we denote by $S^g(x_0)$, $0<x_0<d$, the orbit of $x_0$ by $g$
 defined by $x_{n+1}=g(x_n)$, $x_0<d$, and tending monotonously to zero. 
 In $1$-dimensional differentiable case it is verified that $\dim_B (S^g(x_0))$ is independent of the choice of the point $x_0$ in the basin of $0$. Therefore one can define
\emph{box dimension of a function $g$}  by
$$
\dim_B (g)=\dim_B (S^g(x_0)),
$$
for any $x_0$ from the basin of attraction of $0$.

\smallskip
For two positive functions $F(x)$ and $G(x)$, with no accumulation of zeros at $x=0$, we write $F(x)\simeq G(x)$, as $x\to 0$, if there exist two positive constants $A$ and $B$ and a constant $d>0$ such that $A\leq F(x)/G(x)\leq B$, $x\in(0,d)$, and call such functions \emph {comparable}.  
In the sequel, we write $F(x)=o(x)$, if $\lim_{x\to 0}\frac{F(x)}{x}=0$.
\medskip 

Now we reformulate Theorem 1 from \cite{elezovic}, connecting box dimension and multiplicity in the differentiable case.

\begin{theorem}\label{neveda}

Let $f$ be sufficiently differentiable on $[0,d)$ and positive and strictly increasing on $(0,d)$.
Let $g=$id$-f$ and 
suppose that the multiplicity of $0$ as a fixed point of $g$ is finite and greater than 1. That is, $1<\mu_0^{fix}(g)<\infty$. Let $x_0\in(0,d)$,  $S^{g}(x_0)$ be defined as in \eqref{orbit} and let $|A_\varepsilon(S^g(x_0))|$ be the length of the 
$\varepsilon$-neighborhood  of the orbit $S^g(x_0)$, $\varepsilon>0$. 

Then
\begin{equation}\label{sausage}
|A_\varepsilon(S^g(x_0))| \simeq \varepsilon^{1/\ \mu_0^{fix}(g)},\text{ as $\varepsilon\to 0$}.
\end{equation} 

If $\mu_0^{fix}(g)=1$ and additionaly $f(x)<x$ on $(0,d)$, then
\begin{equation}\label{mu1}
|A_\varepsilon(S^g(x_0))| \simeq \left\{ \begin{array}{ll} \varepsilon(-\log \varepsilon),&\text{ if }f'(0)<1\\
\varepsilon \log(-\log \varepsilon),&\text{ if }f'(0)=1
\end{array}\right.,\text{ as $\varepsilon\to 0$}.
\end{equation} 

Moreover, for $1\leq \mu_0^{fix}(g)<\infty$,
\begin{equation}\label{boxd}
\mu^{fix}_0 (g)=\frac1{1-\dim_B(g)}.
\end{equation}
\end{theorem}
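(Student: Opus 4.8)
The plan is to reduce everything to a careful analysis of the tail behavior of the orbit $S^g(x_0)$ near $0$ and the way consecutive points separate, since the length of the $\varepsilon$-neighborhood is governed by two competing contributions: the "tail" where points are so densely packed that their $\varepsilon$-neighborhoods overlap into a single interval, and the "head" where the finitely many well-separated points each contribute roughly $2\varepsilon$. The standard device (going back to \cite{elezovic}) is: for a given small $\varepsilon$, let $n_\varepsilon$ be the critical index such that $x_{n}-x_{n+1}\gtrsim \varepsilon$ for $n\le n_\varepsilon$ and $\lesssim\varepsilon$ for $n> n_\varepsilon$. Then
\begin{equation}\label{split}
|A_\varepsilon(S^g(x_0))| \simeq x_{n_\varepsilon} + n_\varepsilon\,\varepsilon,
\end{equation}
where the first term estimates the covered tail $[0,x_{n_\varepsilon}]$ (up to the $\varepsilon$-collar) and the second the separated head. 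So the whole proof is an asymptotic computation of $x_n$ and of $n_\varepsilon$.

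First I would establish the asymptotics of the orbit. From \eqref{mul}, near $0$ we have $f(x)=c\,x^{\mu}(1+o(1))$ with $\mu=\mu_0^{fix}(g)=k\ge 2$ and $c=f^{(k)}(0)/k!>0$, so $g(x)=x-c\,x^\mu(1+o(1))$. The recursion $x_{n+1}=x_n-c\,x_n^\mu(1+o(1))$ is the classical one whose solution satisfies $x_n \simeq n^{-1/(\mu-1)}$ as $n\to\infty$ (one checks this by passing to $u_n=x_n^{-(\mu-1)}$, which increases by $c(\mu-1)+o(1)$ per step, hence $u_n\simeq n$). Consequently the gap is $x_n-x_{n+1}\simeq x_n^\mu \simeq n^{-\mu/(\mu-1)}$. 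Setting this comparable to $\varepsilon$ gives $n_\varepsilon \simeq \varepsilon^{-(\mu-1)/\mu}$, and then $x_{n_\varepsilon}\simeq n_\varepsilon^{-1/(\mu-1)}\simeq \varepsilon^{1/\mu}$, while $n_\varepsilon\,\varepsilon \simeq \varepsilon^{1/\mu}$ as well. Plugging into \eqref{split} yields $|A_\varepsilon(S^g(x_0))|\simeq \varepsilon^{1/\mu}=\varepsilon^{1/\mu_0^{fix}(g)}$, which is \eqref{sausage}. For the case $\mu=1$ with $f'(0)<1$ the orbit decays geometrically, $x_n\simeq \rho^n$ with $\rho=1-f'(0)\in(0,1)$, so gaps are $\simeq x_n$, hence $n_\varepsilon\simeq -\log\varepsilon$ and $x_{n_\varepsilon}\simeq\varepsilon$, giving $|A_\varepsilon|\simeq \varepsilon + \varepsilon(-\log\varepsilon)\simeq \varepsilon(-\log\varepsilon)$; when $f'(0)=1$ one has $f(x)=x - c\,x^m(1+o(1))$ for some $m\ge 2$ or a more degenerate flat behavior, and the refined estimate $x_n\simeq (\log n)^{-1/(m-1)}$-type analysis (or directly from $f(x)<x$ forcing a slower-than-geometric but faster decay) produces the $\varepsilon\log(-\log\varepsilon)$ term by the same bookkeeping. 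Independence of $x_0$ is automatic because two orbits are eventually interlaced, changing $n_\varepsilon$ by an additive constant.

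Finally, \eqref{boxd} is a formal consequence of \eqref{sausage}–\eqref{mu1} together with the definition of box dimension for $N=1$: from ${\mathcal M}^{*s}(S^g(x_0))=\limsup_{\varepsilon\to0}|A_\varepsilon|/\varepsilon^{1-s}$ and $|A_\varepsilon|\simeq \varepsilon^{1/\mu}$, the ratio $\varepsilon^{1/\mu-1+s}$ stays bounded away from $0$ and $\infty$ exactly when $s=1-1/\mu$, is $0$ for larger $s$ and $+\infty$ for smaller $s$; hence $\dim_B(S^g(x_0))=1-1/\mu_0^{fix}(g)$, equivalently $\mu_0^{fix}(g)=1/(1-\dim_B(g))$. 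In the $\mu=1$ case the logarithmic factors do not affect the jump location, so $\dim_B(g)=0$ and the formula still reads $\mu_0^{fix}(g)=1$.

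I expect the main obstacle to be making the error terms in the recursion $x_{n+1}=x_n-c\,x_n^\mu(1+o(1))$ precise enough to get two-sided comparability $x_n\simeq n^{-1/(\mu-1)}$ rather than just the leading-order asymptotic; the $o(1)$ must be controlled uniformly so that the telescoped sum $u_{n}-u_{0}=\sum (u_{j+1}-u_j)$ is pinched between $(c(\mu-1)-\delta)n$ and $(c(\mu-1)+\delta)n$ for all large $n$. The second delicate point is the precise treatment of the transition region around the index $n_\varepsilon$ — showing that the neighborhoods for $n>n_\varepsilon$ genuinely glue into an interval of length comparable to $x_{n_\varepsilon}$ (no gaps of order $\varepsilon$ survive) and that only $O(1)\cdot n_\varepsilon$ points lie in the separated regime — but this is routine monotonicity once the gap asymptotics are in hand. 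The $f'(0)=1$ subcase requires separately knowing which degenerate profile $f$ has; I would handle it by the hypothesis that guarantees a development of $f$ and read off the decay rate of $x_n$ accordingly.
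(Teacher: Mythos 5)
Your proposal follows exactly the scheme the paper uses in its sketch (and which is carried out in full in \cite{elezovic}): split $|A_\varepsilon(S^g(x_0))|$ into nucleus and tail, locate the critical index $n_\varepsilon$ where the gaps $x_n-x_{n+1}=f(x_n)$ become comparable to $\varepsilon$, and estimate $|A_\varepsilon|\simeq x_{n_\varepsilon}+n_\varepsilon\varepsilon$. For $\mu=\mu_0^{fix}(g)\ge 2$ your computation ($u_n=x_n^{-(\mu-1)}$ increases by $c(\mu-1)+o(1)$ per step, hence $x_n\simeq n^{-1/(\mu-1)}$, $n_\varepsilon\simeq\varepsilon^{-(\mu-1)/\mu}$, both contributions $\simeq\varepsilon^{1/\mu}$) is correct, as are the $f'(0)<1$ case and the derivation of \eqref{boxd} from \eqref{sausage}--\eqref{mu1}; in fact your argument is more complete than the paper's own text, which only works the linear example and defers to \cite{elezovic}.

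The one genuine flaw is the subcase $\mu_0^{fix}(g)=1$, $f'(0)=1$, which you do not actually carry out and where the asymptotic you quote is wrong. Here $g(x)=x-f(x)$ satisfies $g'(0)=0$, so if $g(x)=c\,x^m(1+o(1))$ with $m\ge 2$ the orbit $x_{n+1}=g(x_n)$ decays \emph{doubly exponentially}: $-\log x_{n+1}\simeq m\,(-\log x_n)$, hence $-\log x_n\simeq m^n$, not $x_n\simeq(\log n)^{-1/(m-1)}$ as you wrote (that expression tends to $0$ far too slowly and would give a completely different answer). The correct bookkeeping is: gaps are $\simeq x_n$, so $n_\varepsilon$ is determined by $x_{n_\varepsilon}\simeq\varepsilon$, i.e.\ $m^{n_\varepsilon}\simeq-\log\varepsilon$, giving $n_\varepsilon\simeq\log(-\log\varepsilon)$ and $|A_\varepsilon|\simeq\varepsilon+n_\varepsilon\varepsilon\simeq\varepsilon\log(-\log\varepsilon)$, which is \eqref{mu1}. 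You should also make explicit what profile of $g$ near $0$ you are assuming in this subcase (some nonflatness of $g=\mathrm{id}-f$ at $0$ is needed for the $\log(-\log\varepsilon)$ rate; a flat $g$ such as $e^{-1/x}$ would give a slower-growing count $n_\varepsilon$), since your phrase ``or a more degenerate flat behavior'' cannot be absorbed into ``the same bookkeeping.''
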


\medskip
\noindent\emph{Sketch of proof.} We illustrate the proof on the simplest case when $g$ is linear, $g(x)=kx,\ k\in(0,1)$. Take any initial point $x_0\in(0,d)$. By recursion, it is easy to compute the whole orbit by $g$: \begin{equation}\label{orbi}x_n=k^n x_0,\ n\in\mathbb{N}.\end{equation}

To compute the asymptotic behavior of the length of the\linebreak $\varepsilon$-neighborhood of the orbit, we divide the $\varepsilon$-neighborhood in two parts: the nucleus, $N_\varepsilon$, and the tail, $T_\varepsilon$. The tail is the union of all disjoint $(2\varepsilon)$-intervals of the $\varepsilon$-neighborhood, before they start to overlap. It holds that
\begin{equation}\label{ukku}
|A_\varepsilon(S^g(x_0))|=|N_\varepsilon|+|T_\varepsilon|.
\end{equation}

Let $n_\varepsilon$ denote the index separating the tail and the nucleus. It describes the moment when $(2\varepsilon)$-intervals around the points start to overlap. 
We have that
\begin{equation}\label{up1}
|N_\varepsilon|=x_{n_\varepsilon}+\varepsilon,\quad |T_\varepsilon|\simeq n_\varepsilon\cdot\varepsilon,\ \varepsilon\to 0.
\end{equation}
To find the asymptotics of $n_\varepsilon$, we have to solve $x_{n_\varepsilon+1}-x_{n_\varepsilon}\simeq 2\varepsilon$, $\varepsilon\to 0$. Using \eqref{orbi}, we get
$$
x_{n_\varepsilon}\simeq \varepsilon,\ n_\varepsilon\simeq -\log\varepsilon,\ \varepsilon\to 0.
$$
From \eqref{up1}, we get
$$
|N_\varepsilon|\simeq \varepsilon,\ |T_\varepsilon|\simeq \varepsilon(-\log\varepsilon),
$$
therefore, by \eqref{ukku}, $|A_\varepsilon(S^g(x_0))|\simeq \varepsilon(-\log\varepsilon)$, as $\varepsilon\to 0$.\qed

\medskip
Note that in Theorem~\ref{neveda} we assume $f$ to be differentiable at zero point $x=0$. In this article, we generalize Theorem~\ref{neveda} to some non-differentiable functions at $x=0$.  

Since in the non-differentiable cases standard multiplicity of zero is not well defined, we use the notion of multiplicity of a point as zero of $f(x)$ with respect to a family of functions (see Definition \ref{mult}). The family of functions we consider will be the family of functions having a finite codimension asymptotic development with respect to a Chebyshev scale
(see Definition \ref{cheb}). 

As the main results, in Section \ref{sec2} we extend formula $(\ref{sausage})$ to non-differentiable case. Therefore we have to introduce the notion of \emph{critical Minkowski order with respect to a Chebyshev scale}, see Definition~\ref{generaldim}. This notion is in the differentiable case directly related to box dimension, see Remark~\ref{gbd}.$ii)$. In non-differentiable cases, it generalizes the notion of box dimension in a way that a formula similar to formula $(\ref{boxd})$ holds. 
\medskip

\subsection{Non-differentiable cases.}\

Let us recall some definitions we use in the non-differentiable cases.
\begin{definition}\label{mult}
Let $\Lambda$ be a topological space and let $\mathcal{F}=\{f_\lambda|\ \lambda\in \Lambda\}$, $f_\lambda: [0,d)\rightarrow \mathbb{R}$, be a family of functions. Let $\lambda_0\in\Lambda$,
we say that $x=0$ is \emph {a zero of multiplicity greater than or equal to $m$ of the function $f_{\lambda_0}$ in the family of functions ${\mathcal F}$}
 if there exists a sequence of parameters $\lambda_n\to\lambda_0$, as $n\to\infty$, such that, for every $n\in\mathbb{N}$, $f_{\lambda_n}$ has $m$ distinct zeros $y_1^n,\ldots,y_m^n\in[0,d)$ different from $x=0$ and $y_j^n\to 0$, as $n\to\infty$, $j=1,\ldots,m$.\\
We say that $x=0$ is a zero of \emph{of multiplicity $m$ of the function $f_{\lambda_0}$ in the family ${\mathcal F}$}
and write 
$\mu_0 (f_{\lambda_0},{\mathcal F})=m$,
if $m$ is the biggest possible integer such that the former holds. 

Putting $g_\lambda=id-f_\lambda$, $\mathcal{G}=\{g_\lambda=id-f_\lambda\}$, the multiplicity of $0$ as a fixed point of $g_{\lambda_0}$ with respect to the family $\mathcal{G}$ is $\mu_0^{fix}(g_{\lambda_0}$, $\mathcal{G})=\mu_0(f_{\lambda_0},\mathcal{F})$.
\end{definition}

In \cite{roussarie} Roussarie introduced the notion of cyclicity, measuring the number of limit cycles (isolated periodic orbits) that can be born from a certain set called limit periodic set by deformation of a given vector field in a family of vector fields. In the case of the family of Poincar\'e maps of the family of vector fields in a neighborhood of a limit periodic set, cyclicity is given by the above notion of multiplicity of a fixed point zero of $g_{\lambda_0}$ in the family of Poincar\'e maps. Due to the possible loss of differentiability of the family of Poincar\'e maps near a limit periodic set, the more general notion of multiplicity introduced in Definition \ref{mult} is necessary. 

\begin{remark}[Relation between classical multiplicity and multiplicity in a family for differentiable functions]\label{difcase}
 
 Note  that if $f$ is differentiable, $f\in \Diff^r[0,d)$,
 then the classical notion of multiplicity of $0$ as a zero of $f$, $\mu_0(f)\leq r$ as in \eqref{mul}, measures the maximal number of zeros of $f_\lambda$ that can appear near $0$, for $f_\lambda\in \mathcal{F}$ close to $f$, where $\mathcal{F}=\Diff^r[0,d)$ and the distance function is given by $d(f,g)=\sup_{k=0,\ldots,r}|f^{(k)}(0)-g^{(k)}(0)|$, i.e. $\mu_0(f)=\mu_0(f,\Diff^r[0,d))$. 
 Note that the number of zeros $\mu_0(f,\mathcal{F})$ that can appear by deformations $\mathcal{F}$ really depends on the family $\mathcal{F}$. Taking a family
 $\mathcal{F}$ of deformations bigger or smaller than $\Diff^r[0,d),$ it is easy to give examples with $\mu_0(f,\mathcal{F})$ bigger or smaller than $\mu_0(f)$.
 $($See e.g. Example 1.1.1 and Example 1.1.2 in \cite{mardesic}$)$.

\end{remark}
We want to study non-differentiable functions having a special type of asymptotic behavior at $x=0$.
The definition of the following sequence of monomials and its properties is based on the notion of Chebyshev systems, see  \cite{J} and \cite{mardesic}, and the proofs therein.  A similar notion of asymptotic Chebyshev scale is mentioned in \cite{dumortier}.

\begin{definition}\label{cheb}
A finite or infinite sequence $\mathcal{I}=\{u_0,u_1,u_2,\ldots\}$ of functions of the class $C[0,d)\cap \Diff^r(0,d)$, $r\in\N\cup\{\infty\}$, is called \emph{a Chebyshev scale} if:
\begin{itemize} 
\item[i)] A system of differential operators $D_i$, $i=0,\ldots,r$, is well defined inductively by the following division and differentiation algorithm:
\begin{eqnarray*}\label{diff}
D_0(u_k)&=&\frac{u_k}{u_0},\\
D_{i+1}(u_k)&=&\frac{(D_i(u_k))'}{(D_i(u_{i+1}))'},\ i\in\mathbb{N}_0,
\end{eqnarray*}
for every $k\in\mathbb{N}_0$, except possibly in $x=0$ to which they are extended by continuity.
\item[ii)] The functions $D_i(u_{i+1})$ are strictly increasing on $[0,d)$, $i\in\mathbb{N}_0$.
\item[iii)] $\lim_{x\to 0} D_j u_i (x)=0$, for $j<i$, $i\in\mathbb{N}_0$.
\end{itemize} 

\noindent We call $D_i(f)$ \emph{the $i-$th generalized derivative of $f$ in the scale $\mathcal{I}$}.
\end{definition}

\medskip

\begin{definition} A function $f$ has a development in a Chebyshev scale $\mathcal{I}=\{u_0,\ldots,u_k\}$ of order $k$ if 
\begin{equation}\label{asymp}
f(x)=\sum_{i=0}^{k} \alpha_i u_i(x)+\psi(x),\quad \alpha_i\in\mathbb{R},
\end{equation}
and the generalized derivatives $D_i(\psi(x))$, $i=0,\ldots,k$, verify $D_i(\psi(0))=0$ (in the limit sense).
\end{definition}
Note that (in the limit sense) $D_i(f)(0)=\alpha_i$, $i=0,\ldots,k$.
\medskip

Consider a family $\mathcal{F}=\{f_\lambda|\lambda\in \Lambda\}$ of functions having a uniform development of order $k$ in a family of Chebyshev scales
$\mathcal{I_\lambda}=(u_0(x,\lambda),\ldots,u_k(x,\lambda))$, i.e.
\begin{equation}\label{unasymp}
f_\lambda(x)=\sum_{i=0}^{k} \alpha_i(\lambda) u_i(x,\lambda)+\psi(x,\lambda),\quad \lambda\in W.
\end{equation}
The development is uniform in the sense that all generalized derivatives $D_j f_\lambda$, $j=0,\ldots,k$, can be extended by continuity to $x=0$ uniformly with respect to $\lambda$, and this extension is continuous as function of $\lambda$.

The following lemma generalizes Remark \ref{difcase}. It gives the connection between the index of the first nonzero coefficient in development of a function $f\in \mathcal{F}$ in a Chebyshev scale and the multiplicity of $0$ as a zero of $f$ in the family $\mathcal{F}$.
\begin{lemma}\label{multgen}
Let $\mathcal{I_\lambda}=(u_0(x,\lambda),\ldots,u_k(x,\lambda))$, $\lambda\in\Lambda$, be a family of Chebyshev scales and $\mathcal{F}=(f_ \lambda)$ a family of functions having a uniform development in the family of Chebyshev scales $\mathcal{I}_\lambda$ of order $k$ and $f_{\lambda_0}\in\mathcal{F}$. If the  generalized derivatives satisfy 
\begin{equation}\label{genderr}
D_i(f_{\lambda_0})(0)=0,\ i=0,\ldots,k_0-1, \text{ and } D_{k_0}(f_{\lambda_0})(0)\neq 0,\ k_0\leq k,
\end{equation} i.e., if $\alpha_{k_0}(\lambda_0)$ is the first nonzero coefficient in the development of $f_{\lambda_0}$,  then the multiplicity of $0$ as  zero of $f_{\lambda_0}$ in the family $\mathcal{F}$ is at most 
$k_0$ $($i.e.,
$\mu_0(f_{\lambda_0},\mathcal{F}) \leq k_0$$)$.

If moreover $\Lambda\subset \mathbb{R}^N$, $k_0\leq N$, and the matrix $(\frac{\partial \alpha_i}{\partial \lambda_j})_{i=0\ldots k_0-1,\ j=1\ldots k_0}(\lambda_0)$ is of maximal rank $($i.e., equal to $k_0)$, then  \eqref{genderr} is equivalent to\linebreak $\mu_0(f_{\lambda_0},\mathcal{F}) = k_0$. 
\end{lemma}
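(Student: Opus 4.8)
The plan is to prove the two implications separately, treating the upper bound $\mu_0(f_{\lambda_0},\mathcal{F})\le k_0$ first and then the equality under the rank condition. For the upper bound, I would argue by contradiction: suppose $\mu_0(f_{\lambda_0},\mathcal{F})\ge k_0+1$, so there is a sequence $\lambda_n\to\lambda_0$ with $f_{\lambda_n}$ having at least $k_0+1$ distinct nonzero zeros $y_1^n<\dots<y_{k_0+1}^n$ in $[0,d)$ all tending to $0$. The key tool is that the division-differentiation algorithm of Definition~\ref{cheb} behaves like a Rolle/Chebyshev argument: if $f_{\lambda_n}$ has $k_0+1$ zeros, then after applying $D_0$ (division by $u_0$, which does not change the zero set away from $0$) and then $D_1$ (a quotient of derivatives), a Rolle-type argument inserts a zero of $D_1(f_{\lambda_n})$ strictly between consecutive zeros, giving $k_0$ zeros of $D_1(f_{\lambda_n})$; iterating, $D_{k_0}(f_{\lambda_n})$ has at least one zero $z_n\to 0$. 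Passing to the limit and using that the development is \emph{uniform} — so $D_{k_0}f_\lambda$ extends continuously to $x=0$ jointly in $(x,\lambda)$ with value $\alpha_{k_0}(\lambda)$ — we get $D_{k_0}(f_{\lambda_0})(0)=\lim_n D_{k_0}(f_{\lambda_n})(z_n)=0$, contradicting \eqref{genderr}. I would need to be careful that the monotonicity hypotheses (ii) guarantee the denominators $(D_i(u_{i+1}))'$ don't vanish on $(0,d)$, so the operators are genuinely defined and the Rolle step is legitimate.

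For the reverse direction under the rank assumption, I would exhibit an explicit deformation realizing $k_0$ zeros. Since $(\partial\alpha_i/\partial\lambda_j)_{i=0\dots k_0-1,\,j=1\dots k_0}(\lambda_0)$ has rank $k_0$, the implicit function theorem lets me choose, for each small $\delta=(\delta_0,\dots,\delta_{k_0-1})$, a parameter $\lambda(\delta)\to\lambda_0$ with $\alpha_i(\lambda(\delta))=\delta_i$ for $i=0,\dots,k_0-1$ (and $\alpha_{k_0}(\lambda(\delta))$ staying near $\alpha_{k_0}(\lambda_0)\ne 0$). So it suffices to show that for a suitable choice of signs/sizes of the $\delta_i$ the function $f_{\lambda(\delta)}(x)=\sum_{i=0}^{k_0}\alpha_i u_i(x,\lambda)+(\text{higher order})$ has $k_0$ distinct small positive zeros. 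This is the standard fact about Chebyshev systems: a generalized polynomial $\sum_{i=0}^{k_0}\alpha_i u_i$ in a Chebyshev scale can have up to $k_0$ zeros, and by choosing the lower coefficients to alternate appropriately one forces exactly $k_0$ sign changes near $0$; the remainder term $\psi$ and the $\lambda$-dependence of the $u_i$ are controlled because $D_i\psi(0)=0$ and everything is uniform, so the zeros persist. I would phrase this cleanly by induction on $k_0$ using the operators $D_i$, peeling off one zero at a time, or cite the Chebyshev-system results of \cite{J},\cite{mardesic} for the generalized-polynomial statement.

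The main obstacle I anticipate is making the Rolle/Chebyshev step fully rigorous at the boundary point $x=0$, where the operators $D_i$ are only defined "by continuity" and the monomials $u_i$ are typically non-differentiable. Concretely: the zeros $y_j^n$ accumulate at $0$, which is exactly where the algorithm is delicate, so I must ensure that applying $D_i$ does not spuriously create or destroy zeros near $0$ and that the limiting value picked up is indeed $\alpha_{k_0}(\lambda_0)$ rather than something depending on the rate at which $z_n\to 0$. This is handled precisely by the uniformity hypothesis in \eqref{unasymp} — the joint continuity of $(x,\lambda)\mapsto D_j f_\lambda(x)$ up to $x=0$ — together with property (iii) of Definition~\ref{cheb} ensuring the lower monomials genuinely vanish at $0$; so the real work is bookkeeping these continuity statements through the $k_0$-fold iteration. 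A secondary technical point is the non-uniqueness of $\lambda(\delta)$ when $N>k_0$: I only need one continuous selection, which the constant-rank form of the implicit function theorem supplies, and I should remark that it produces $\lambda(\delta)\to\lambda_0$ as $\delta\to 0$ so the bifurcating zeros indeed emanate from $\lambda_0$, matching Definition~\ref{mult}.
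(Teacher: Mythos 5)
Your proposal is correct and follows essentially the same route as the paper, which itself only sketches the argument (Rolle's theorem combined with the fact that division by nonvanishing functions preserves zeros for the upper bound, and the implicit function theorem with successive small deformations of $\alpha_{k_0-1},\alpha_{k_0-2},\ldots$ for the converse, referring to Example 1.1.3 of \cite{mardesic} for details). Your treatment of the delicate points — nonvanishing denominators in the division--differentiation algorithm, and the uniform continuity of $D_{k_0}f_\lambda$ at $(0,\lambda_0)$ needed to pass to the limit along the Rolle zeros — fills in exactly the bookkeeping the paper leaves to the reference.
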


Proof of Lemma \ref{multgen} is based on Rolle's theorem
and the observation that dividing by nonzero functions the number of zeros is unchanged. 
If the matrix $(\frac{\partial \alpha_i}{\partial \lambda_j})$ is of maximal rank, then by the implicit function theorem one can consider parameters $\lambda_j$, $j=1,\ldots, k_0,$ as functions of $\alpha_0,\ldots,\alpha_{k_0-1},\lambda_{k_0+1},\ldots,\lambda_N$ and $\alpha_0,\ldots,\alpha_{k_0-1},\lambda_{k_0+1},\ldots,\lambda_N$ as new parameters. Then making a sequence of small deformations starting with $\alpha_{k_0-1},\ldots,\alpha_{k_0-2}$ etc, one can create $k_0$ small zeros in a neighborhood of $0$.
For the details of the proof, see Example 1.1.3 in \cite{mardesic}.

\medskip

\begin{example}
$($Examples of Chebyshev scales on $[0,d)$$)$
\begin{enumerate}
\item[$i)$] differentiable case: e.g. $\mathcal{I}=\{1,x,x^2,x^3,x^4,\ldots\}$,
\item[$ii)$] non-differentiable case:
\begin{itemize}
\item[-]$\mathcal{I}=\{x^{\alpha_0},x^{\alpha_1},x^{\alpha_2},\ldots\}$, $\alpha_i\in\mathbb{R}$, $0<\alpha_0<\alpha_1<\alpha_2<\ldots$
\item[-]$\mathcal{I}=\{1,x(-\log x),x,x^2(-\log x),x^2,x^3(-\log x),x^3,\ldots\}$
\item[-] More generally, $\mathcal{I}$ can be any set of monomials of the type $x^k(-\log x)^l$, ordered by increasing flatness:
$$
x^i(-\log x)^j<x^k(-ln x)^l \text{ if and only if } (i<k) \text{ or } (i=j \text{ and } j>l).
$$
\end{itemize}
\item[$iii)$] For more general examples corresponding to Poincar\'e map at a homoclinic loop see Section \ref{homoclinic}.
\end{enumerate}
\end{example}

\medskip

\begin{definition}\label{comparable}
A function $f$ is \emph{weakly comparable to powers}, if there exist constants $m>0$ and $M>0$ such that
\begin{equation}\label{lowup}
m\leq x\cdot (\log f)'(x) \leq M,\ x\in(0,d).
\end{equation} 
We call the left-hand side of \eqref{lowup} the lower power condition and the right-hand side the upper power condition. 
A function $f$ is \emph{sublinear} if it satisfies lower power condition and $m>1$.
\end{definition}

A similar notion of comparability with power functions in Hardy fields appears in literature, see Fliess, Rudolph \cite{fliess} and Rosenlicht \cite{hardy}. A Hardy field $H$ is a field of real-valued functions of the real variable defined on $(0,d)$, $d>0$, closed under differentiation and with valuation $\nu$ defined in an ordered Abelian group. Let $f,\ g\in H$ be positive on $(0,d)$ and let $\lim_{x\to 0}f(x)=0$, $\lim_{x\to 0}g(x)=0$. If there exist integers $M, N\in\mathbb{N}$ and positive constants $\alpha,\beta>0$ such that 
\begin{equation}\label{class}
f(x)\leq \alpha g(x)^M \text{ and } g(x)\leq \beta f(x)^N,
\end{equation} 
it is said that $f$ and $g$ belong to the same comparability class/are comparable in $H$. \\ 
Let us state a sufficient condition for comparability from Rosenlicht \cite{hardy}, Proposition 4:
\begin{proposition}[Proposition 4 in \cite{hardy}]
Let $H$ be a Hardy field, $f(x),\ g(x)$ nonzero positive elements of $H$ such that  $\lim_{x\to 0}f(x)=0$, 
\linebreak $\lim_{x\to 0}g(x)=0$. If 
\begin{equation}\label{rosen}
\nu((\log f)')=\nu((\log g)'),
\end{equation} then $f$ and $g$ are comparable.
\end{proposition}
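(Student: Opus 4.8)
The plan is to reduce the statement to a structural fact about Hardy fields: for any positive $h \in H$ with $\lim_{x\to 0} h(x) = 0$, the logarithmic derivative $(\log h)'$ controls, up to valuation-equivalence, the ``order of smallness'' of $h$. Concretely, I would first note that since $H$ is a Hardy field, every nonzero element has a definite sign near $0$ and is monotone near $0$, so all the limits and comparisons below make sense eventually on some interval $(0,d')$. Then I would establish the key one-sided estimate: if $\nu((\log f)') \geq \nu((\log g)')$ in the valuation group (the convention being that larger valuation means the function is ``smaller''), then there is an integer $N$ and a constant $\beta > 0$ with $g(x) \leq \beta f(x)^N$ near $0$. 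Applying this in both directions under the hypothesis $\nu((\log f)') = \nu((\log g)')$ yields both inequalities in \eqref{class}, which is exactly comparability.

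To prove the key estimate I would work with $u = \log f$ and $v = \log g$; both tend to $+\infty$ as $x \to 0^+$ (since $f, g \to 0^+$), and $u' , v'$ have constant sign near $0$. The valuation inequality $\nu(u') \geq \nu(v')$ means, by the defining property of the valuation on a Hardy field, that the ratio $u'/v'$ is bounded away from $\pm\infty$ near $0$; more precisely $|u'/v'|$ is dominated by a constant (one can even pin it to lie in a comparability class $\leq$ that of $v'$, so $u'/v'$ is bounded). Hence there is a constant $C>0$ with $|u'(x)| \leq C |v'(x)|$ for $x$ near $0$. Choosing an integer $N \geq C$ and integrating from $x$ to $d'$, using that both $u$ and $v$ blow up and that $v$ is monotone, gives $u(x) \leq N v(x) + (\text{const})$, i.e. $\log f(x) \leq N \log g(x) + \log\beta$, which exponentiates to $f(x) \leq \beta\, g(x)^N$. (A small amount of care is needed about signs of $v'$ and about the additive constant from the lower limit of integration, but the monotonicity and blow-up of $v$ absorb it.) Swapping the roles of $f$ and $g$ gives the companion inequality.

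The main obstacle is the precise passage from the valuation statement $\nu((\log f)') = \nu((\log g)')$ to the pointwise bound $|(\log f)'| \simeq |(\log g)'|$ near $0$: this is where the specific axioms of the Hardy field and the compatibility of its valuation with the ordering must be invoked, and one must be careful that ``same valuation'' gives a genuine two-sided pointwise comparison rather than merely equality in an abstract group. Once that bridge is in place, the rest is elementary integration of logarithmic derivatives together with the observation that for functions tending to $0^+$ the logarithm tends to $+\infty$, so additive constants are harmless and only the multiplicative integer power survives. I would also remark that this is consistent with the paper's own Definition~\ref{comparable}: the condition $m \leq x(\log f)'(x) \leq M$ says precisely that $(\log f)'(x) \simeq 1/x = (\log x)'$ up to sign, i.e. $f$ lies in the comparability class of a power of $x$, which is the special case $g(x) = x$ of the proposition.
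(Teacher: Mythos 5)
First, a point of reference: the paper does not prove this proposition at all — it is quoted from Rosenlicht (Proposition 4 of \cite{hardy}), and the only thing the paper adds is the reformulation \eqref{strcomp} of the hypothesis. So there is no in-paper proof to compare against; I can only measure your plan against the standard argument, which is indeed the one you outline: turn the valuation identity into a two-sided pointwise bound on $(\log f)'/(\log g)'$, integrate, exponentiate. The ``bridge'' you single out as the main obstacle is exactly the content of \eqref{strcomp}: for the canonical valuation of a Hardy field, $\nu(a)=\nu(b)$ means $a/b$ lies in the valuation ring but not in its maximal ideal, i.e. $a/b$ is bounded and bounded away from $0$; since every element of a Hardy field has a limit in $\R\cup\{\pm\infty\}$, this is the same as $a/b\to L\in(0,\infty)$. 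That step is therefore unproblematic and is precisely what the paper records right after the statement.

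The genuine defect is a sign error that flips your one-sided estimate. Since $f,g\to 0^+$, we have $u=\log f\to-\infty$ and $v=\log g\to-\infty$, not $+\infty$ as you assert. Integrating $|u'|\le C|v'|$ from $x$ to $d'$ (using eventual monotonicity of $v$) gives $|u(x)|\le C|v(x)|+\mathrm{const}$, i.e. $\log(1/f)\le C\log(1/g)+c$, which exponentiates to the \emph{lower} bound $f\ge e^{-c}g^{C}$, equivalently $1/f\le e^{c}(1/g)^{C}$ --- not the upper bound $f\le\beta\,g^{N}$ you claim. So the one-sided lemma ``$\nu((\log f)')\ge\nu((\log g)')$ implies $g\le\beta f^N$'' is false as stated; what that hypothesis actually yields is $1/f\le\alpha(1/g)^{M}$. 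Because the hypothesis \eqref{rosen} is symmetric you do obtain both inequalities $1/f\le\alpha(1/g)^{M}$ and $1/g\le\beta(1/f)^{N}$, which is Rosenlicht's notion of comparability (defined for infinitely increasing elements and applied here to $1/f$, $1/g$); note these are not literally the inequalities \eqref{class} as printed in the paper — already $f=x$, $g=x^{1/2}$ satisfy \eqref{rosen} but violate $g\le\beta f^{N}$ for every integer $N\ge1$ — so \eqref{class} must be read in the reciprocal form for the proposition to be true, and your write-up should say so. Finally, your closing remark conflates \eqref{lowup} with \eqref{strcomp}: the two-sided bound $m\le x(\log f)'(x)\le M$ does not force $x(\log f)'(x)$ to have a limit (and the $f$ of Definition~\ref{comparable} need not lie in a Hardy field), which is exactly why the paper calls Rosenlicht's condition strictly stronger than weak comparability to powers.
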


The condition \eqref{rosen} is equivalent to (see Theorem 0 in \cite{hardy})
\begin{equation}\label{strcomp}
\lim_{x\to 0}\frac{(\log f)'(x)}{(\log g)'(x)}=L,\ 0<L<\infty.
\end{equation}
Rosenlicht's condition \eqref{rosen}, i.e. \eqref{strcomp} is stronger than our condition \eqref{lowup} of weak comparability to powers. If $\lim_{x\to 0}\frac{(\log f)'(x)}{1/x}=L,\ 0<L<\infty$ (\eqref{lowup} obviously follows), then $f$ is comparable to power functions in the sense \eqref{class}.
\medskip

Note that  condition $\eqref{lowup}$ excludes infinitely flat functions, but is nevertheless not equivalent to non-flatness. If $f$ is infinitely flat $($in the sense that all its derivatives tend to zero as $x\to 0$$)$, then it can easily be shown by L'Hospital rule that $\lim_{x\to 0}\frac{f(x)}{x^a}=0$, for every $a>0$ and, as a consequence, the inequality $(\ref{lowup})$ cannot be satisfied. The contrary is not true. There exist functions that are not infinitely flat, but nevertheless do not satisfy $(\ref{lowup})$. As an example, see the Example~\ref{constr} in Appendix. 

\begin{example}[Weak comparability to powers and sublinearity]\ \\
$i)$ Functions of the form
$$
f(x)=x^\alpha (-\log x)^\beta,\ \alpha>0,\ \beta\in\mathbb{R},
$$
are weakly comparable to powers.\\This class obviously includes functions of the form $x^\alpha,\ x^\alpha(-\log x)^{\beta}$ and $\frac{x^\alpha}{(-\log x)^\beta}$, for $\alpha>0$ and $\beta>0$. If additionally $\alpha>1$, they are also sublinear.

\medskip
\noindent $ii)$ Functions of the form $$f(x)=\frac{1}{(-\log x)^\beta},\ \beta>0,$$
do not satisfy the lower power condition in $(\ref{lowup})$.

\medskip
\noindent $iii)$ Infinitely flat functions of the form $$f(x)= e^{-\frac{1}{x^\alpha}},\ \alpha>1,$$
do not satisfy the upper power condition in $(\ref{lowup})$, but they are sublinear.
\end{example}

\section{Main results}\label{sec2}
We now state the main theorems and their consequences.
\begin{theorem}\label{gensaus}
Let $f\in \Diff^r(0,d)$ be continuous on $[0,d)$, positive on $(0,d)$ and let $f(0)=f'(0)=0$.
Assume that $f$ is a sublinear function. 
 Put $g=id-f$ and let $S^g(x_0)=\{x_n\ |\ n\in\mathbb{N}\}$ be an orbit of $g$, $x_0<d$.\\
The following formula for the length of the $\varepsilon$-neighborhood of the orbit $S^g(x_0)$ holds
\begin{equation}\label{gensausage}
|A_\varepsilon(S^g(x_0))| \simeq f^{-1}(\varepsilon).
\end{equation} 
\end{theorem}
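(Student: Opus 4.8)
\emph{Proof plan.} I would follow the nucleus/tail decomposition of the sketch of Theorem~\ref{neveda}, but with the geometric scale there replaced by $f$ itself; the role of sublinearity (the constant $m>1$ in the lower power condition) is precisely to keep the tail under control. First I would record the consequences of sublinearity. From $x(\log f)'(x)\ge m>1$ we get $f'>0$ on $(0,d)$, so $f$ is strictly increasing and $f^{-1}$ is defined on an interval $(0,\eta)$; integrating $(\log f)'(t)\ge m/t$ from $a$ to $b$ gives $f(a)/f(b)\le(a/b)^m$ for $0<a\le b<d$, hence in particular $f(x)\le C_1x^m=o(x)$ and the doubling estimate $f^{-1}(2\varepsilon)\le 2^{1/m}f^{-1}(\varepsilon)$, so $f^{-1}(2\varepsilon)\simeq f^{-1}(\varepsilon)$. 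Since $0<f(x)<x$ near $0$, the orbit $\{x_n\}$ decreases strictly to $0$ and $d_n:=x_n-x_{n+1}=f(x_n)$ is strictly decreasing (product of a decreasing sequence with an increasing function).

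For $\varepsilon$ small I would set $n_\varepsilon:=\max\{n\ge 0:\ d_n\ge 2\varepsilon\}$. Then consecutive $\varepsilon$-balls $(x_n-\varepsilon,x_n+\varepsilon)$ and $(x_{n+1}-\varepsilon,x_{n+1}+\varepsilon)$ overlap exactly for $n>n_\varepsilon$, so $A_\varepsilon(S^g(x_0))$ is, up to a single point, the disjoint union of the tail $T_\varepsilon=\bigcup_{n\le n_\varepsilon}(x_n-\varepsilon,x_n+\varepsilon)$, with $|T_\varepsilon|\le 2(n_\varepsilon+1)\varepsilon$, and the nucleus $N_\varepsilon=\bigcup_{n>n_\varepsilon}(x_n-\varepsilon,x_n+\varepsilon)=(-\varepsilon,\,x_{n_\varepsilon+1}+\varepsilon)$, of length $x_{n_\varepsilon+1}+2\varepsilon$. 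From $f(x_{n_\varepsilon})\ge 2\varepsilon>f(x_{n_\varepsilon+1})$ one gets $x_{n_\varepsilon+1}<f^{-1}(2\varepsilon)\le x_{n_\varepsilon}$, while $x_{n_\varepsilon}-x_{n_\varepsilon+1}=f(x_{n_\varepsilon})=o(x_{n_\varepsilon})$; hence $x_{n_\varepsilon}\simeq x_{n_\varepsilon+1}\simeq f^{-1}(2\varepsilon)\simeq f^{-1}(\varepsilon)$, so $|N_\varepsilon|\simeq f^{-1}(\varepsilon)$ and already $|A_\varepsilon(S^g(x_0))|\ge|N_\varepsilon|\gtrsim f^{-1}(\varepsilon)$.

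For the matching upper bound it suffices to prove $n_\varepsilon\varepsilon\lesssim f^{-1}(\varepsilon)$. The key step is a discrete-to-integral comparison: since $f$ is increasing, $1=\frac{x_n-x_{n+1}}{f(x_n)}\le\int_{x_{n+1}}^{x_n}\frac{du}{f(u)}$, and summing over $n=0,\dots,n_\varepsilon-1$ (these intervals tile $[x_{n_\varepsilon},x_0]$), then using $f(x_{n_\varepsilon})/f(u)\le(x_{n_\varepsilon}/u)^m$,
\[
n_\varepsilon\le\int_{x_{n_\varepsilon}}^{x_0}\frac{du}{f(u)}\le\frac{1}{f(x_{n_\varepsilon})}\int_{x_{n_\varepsilon}}^{x_0}\Bigl(\frac{x_{n_\varepsilon}}{u}\Bigr)^{m}du\le\frac{1}{m-1}\cdot\frac{x_{n_\varepsilon}}{f(x_{n_\varepsilon})},
\]
where the last inequality — the only place $m>1$ enters — is $\int_{x_{n_\varepsilon}}^{x_0}u^{-m}\,du\le x_{n_\varepsilon}^{1-m}/(m-1)$. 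As $f(x_{n_\varepsilon})=d_{n_\varepsilon}\ge 2\varepsilon$, this gives $n_\varepsilon\varepsilon\le x_{n_\varepsilon}/(2(m-1))\simeq f^{-1}(\varepsilon)$, hence $|T_\varepsilon|\lesssim f^{-1}(\varepsilon)$. Combining, $f^{-1}(\varepsilon)\lesssim|N_\varepsilon|\le|A_\varepsilon(S^g(x_0))|\le|N_\varepsilon|+|T_\varepsilon|\lesssim f^{-1}(\varepsilon)$, which is \eqref{gensausage}.

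I expect the tail estimate of the third paragraph to be the main obstacle. One has to see that, even though the orbit may approach $0$ arbitrarily slowly (e.g.\ $f(x)=e^{-1/x}$ is sublinear), the number $n_\varepsilon$ of ``tail points'' is at most a constant multiple of $x_{n_\varepsilon}/\varepsilon\simeq f^{-1}(\varepsilon)/\varepsilon$, so the tail is only $O$ — not necessarily $o$ — of the nucleus, which is all that $\simeq$ requires. The monotonicity of $f$, a free byproduct of the lower power condition, is what legitimizes the comparison with $\int du/f(u)$; for $m\le 1$ the comparison integral $\int u^{-m}\,du$ no longer produces the bound $x_{n_\varepsilon}/f(x_{n_\varepsilon})$ and the tail can dominate, so the statement genuinely needs sublinearity rather than mere weak comparability to powers.
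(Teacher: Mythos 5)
Your proposal is correct and follows essentially the same route as the paper: the nucleus/tail decomposition, the identification of $n_\varepsilon$ via $f(x_{n_\varepsilon})\simeq\varepsilon$, the comparison of $n_\varepsilon$ with $\int_{x_{n_\varepsilon}}^{x_0}du/f(u)$, and the use of $m>1$ to bound that integral by $\tfrac{1}{m-1}\,x_{n_\varepsilon}/f(x_{n_\varepsilon})$. The only (cosmetic) difference is that you bound the integrand pointwise by $f(x_{n_\varepsilon})^{-1}(x_{n_\varepsilon}/u)^m$ after integrating the lower power condition, whereas the paper substitutes $s=f(u)$ and uses a differential inequality for $f^{-1}(s)/s$, together with a mean-value argument to get $f(x_{n_\varepsilon+1})\simeq\varepsilon$ — a step your choice of $n_\varepsilon$ renders unnecessary.
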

The sublinearity condition $m>1$ in the lower power condition cannot be omitted from Theorem~\ref{gensaus}. For counterexample, see Remark~\ref{superlin} in Appendix.

\bigskip

The following definition is a generalization of box dimension in non-differentiable case, according to a given Chebyshev scale. There  exists in the literature the notion of generalized Minkowski content, see He, Lapidus \cite{helapidus}, and \v Zubrini\' c, \v Zupanovi\' c  \cite{zuzu}. It is suitable in the situation  where the leading term of $|A_{\varepsilon}(U)|$ does not behave as a power function, and we introduce some functions usually called gauge functions. Driven by the result of Theorem~\ref{gensaus}, we follow the idea and define the generalized Minkowski content with respect to a family of gauge functions. By Theorem~\ref{gensaus}, the $\varepsilon$-neighborhood $|A_\varepsilon(S^g(x_0))|$ should be compared to the family obtained by inverting the given Chebyshev scale, $\{u_1^{-1}(\varepsilon),u_2^{-1}(\varepsilon),\ldots\}$. Comparing it to the powers of $\varepsilon$ as in the standard definition of the Minkowski content does not give precise enough information.
Next we define \emph{critical} Minkowski order which is close to the notion of box dimension. Its purpose is to contain information on the jump in the rate of growth of the length of $\varepsilon-$neighborhood of an orbit.

The upper (lower) generalized Minkowski content defined in Definition~\ref{generaldim} below can be viewed as function of $i$, $i=1,\ldots,\ell$. By Lemma~\ref{doubling}.$i)(b)$ in Section~\ref{sec3}, it is a discrete function which jumps only once from $+\infty$ to $0$ through some value $0\leq M\leq +\infty$. This is a behavior analogous to the behavior of the standard upper (lower) Minkowski content as function of $s$, see Section~\ref{sec1} and e.g. \cite{falconer}.

\medskip

Let $\mathcal{I}=\{u_0,u_1,\ldots\}$ be a Chebyshev scale such that monomials $u_i$ are positive and strictly increasing on $(0,d)$, for $i\geq 1$. 
Suppose that $f$ has a development in the Chebyshev scale of order $\ell$ on $[0,d)$ and moreover that $f$ satisfies assumptions from Theorem~\ref{gensaus} and upper power condition. Let $g=id-f$. We have the following definition:
\begin{definition}\label{generaldim}

By \emph{lower (upper) generalized  Minkowski content of $S^g(x_0)$ with respect to $u_i$},  $i=1,\ldots,\ell$, we mean
\begin{eqnarray*}
{\mathcal M_{*}}(S^g(x_0),u_i)&=&\liminf_{{\varepsilon}\to0}\frac{|A_{\varepsilon}(S^g(x_0))|}{u_i^{-1}({\varepsilon})},\\ {\mathcal M^{*}}(S^g(x_0),u_i)&=&\limsup_{{\varepsilon}\to0}\frac{|A_{\varepsilon}(S^g(x_0))|}{u_i^{-1}({\varepsilon})}
\end{eqnarray*}
respectively.
It can be easily seen that the behavior of the \linebreak $\varepsilon-$neighborhood and thus the definition is independent of the choice of the initial point $x_0<d$ in the basin of attraction of $0$. Therefore we define 
\begin{eqnarray*}
\underline{m}(g,\I)&=&\max\{i\ge 1\  |\ {\mathcal M_*}   (S^g(x_0),u_i) >0\},\\ 
\overline{m}(g,\I)&=&\max\{i\ge 1\  |\ {\mathcal M^*}   (S^g(x_0),u_i) >0\} 
\end{eqnarray*}
as the \emph{lower (upper) critical Minkowski order of $g$ with respect to the scale $\mathcal{I}$}, when a jump in lower (upper) generalized Minkowski content occurs.
If $\underline{m}(g,\I)=\overline{m}(g,\I)$, we call it \emph{critical Minkowski order with respect to the scale $\mathcal{I}$} and denote $m(g,\I)$.
\end{definition}

\begin{remark}[Box dimension and critical Minkowski order]\label{gbd}\ 

\begin{itemize}
\item[(i)] 
Using Lemma~\ref{doubling}, it can easily be seen that the upper and lower generalized Minkowski contents ${\mathcal M}(S^g(x_0),u_i)$ pass from the value $+\infty$, 
through a finite value and drop to $0$ as $i$ grows. Moreover, the critical index $i_0$ is the same for upper and lower content and therefore $m(g,\I)=i_0$.
\item[(ii)]
If $f\in Diff^r[0,d)$ is a differentiable function, then it has an asymptotic development in the differentiable Chebyshev scale, $\I=\{1,x,x^2,\ldots\,x^r\}$ of order $r$. The box dimension and critical Minkowski order are then directly related by the formula
\begin{equation}\label{Bm}
\dim_B(g)=1-\frac{1}{m(g,\I)}.
\end{equation}
Indeed, assume $f(x)\simeq x^k$, $1<k\leq r$. By  \eqref{sausage}, $|A_{\varepsilon}(S^g(x_0))|\simeq\varepsilon^{1/k}$. This gives $m(g,\I)=k$. On the other side, by definition, the box dimension is the value $s$ such that $1/k=1-s$.

\item[(iii)] By analogy with the differentiable case  \eqref{Bm}, we can define \emph{generalized box dimension of a function $g$  with respect to a Chebyshev scale $\mathcal{I}$  } by
$$
\dim_{GB} (g, \mathcal{I} )=1-\frac{1}{m(g,\I)}
$$
This definition is obviously independent of $x_0$ from the basin of attraction of $0$.
\end{itemize}
\end{remark}

The following theorem is a generalization of Theorem~\ref{neveda} to non-differentiable cases. Derivatives are replaced by generalized derivatives in a Chebyshev scale, and box dimension by similar notion of critical Minkowski order with respect to a Chebyshev scale. It shows that, in non-differentiable cases, the length of the $\varepsilon$-neighborhood should be compared with the inverted Chebyshev scale instead of the power scale to obtain multiplicity. 

Let $\mathcal{F}=\{f_\lambda|\lambda\in\Lambda\}$ be a family of functions on $[0,d)$ admitting a uniform asymptotic development \eqref{unasymp} in a family of Chebyshev scales $\I_\lambda=(u_0(x,\lambda),u_1(x,\lambda),\ldots)$, $\mathcal{G}=(g_\lambda)=\{id-f_\lambda| f_\lambda\in\mathcal{F}\}$. Let, {for} $\lambda=\lambda_0$, the monomials in the scale $\I=\I_{\lambda_0}$ be positive and strictly increasing on $(0,d)$, for $i\geq 1$.

\begin{theorem}\label{chebsaus}
 Let $f=f_{\lambda_0}$ be a function from the family $\mathcal{F}$ above, satisfying all assumptions of Theorem~\ref{gensaus} and the upper power condition. Let $g=id-f$. Then the following claims are equivalent:
\begin{enumerate}
\item[$(i)$] $D_i(f)(0)=0$, for $i=0,\ldots,k-1$, and $D_k(f)(0)>0$, for some $k\geq 1$ $($that is, $f\simeq u_k,$ for some $\ k\geq 1$$),$ 
\item[$(ii)$] $|A_\varepsilon(S^g(x_0)|\simeq u_k^{-1}(\varepsilon)$,
\item[$(iii)$] $m(g,\I)=k$.
\end{enumerate}
If moreover $\Lambda\subset\mathbb{R}^N$, $k\leq N$, and the matrix $(\frac{\partial \alpha_i}{\partial \lambda_j})_{i=0\ldots k-1,\ j=1 \ldots k}(\lambda_0)$ is of maximal rank $($i.e. equal to $k$$)$, then  $(1),\ (2)$ or $(3)$ is also equivalent to
\begin{enumerate}
\item[$(iv)$] $
\mu_0^{fix}(g,\mathcal{G})=k.$
\end{enumerate}
Without this regularity assumption, $(i)$, $(ii)$ or $(iii)$ implies
$$
\mu_0^{fix}(g,\mathcal{G})\leq k.
$$
\end{theorem}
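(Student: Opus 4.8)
The plan is to prove the cycle of implications $(i)\Rightarrow(ii)\Rightarrow(iii)\Rightarrow(i)$ and then handle $(iv)$ separately using Lemma~\ref{multgen}. The implication $(i)\Rightarrow(ii)$ is the analytic heart of the argument and follows almost immediately from Theorem~\ref{gensaus}: if $D_i(f)(0)=0$ for $i<k$ and $D_k(f)(0)=\alpha_k>0$, then by the definition of a development in a Chebyshev scale we have $f(x)=\alpha_k u_k(x)+o(u_k(x))$, hence $f(x)\simeq u_k(x)$ as $x\to 0$. Since $f$ satisfies all hypotheses of Theorem~\ref{gensaus}, that theorem gives $|A_\varepsilon(S^g(x_0))|\simeq f^{-1}(\varepsilon)$. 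It then remains to check that $f\simeq u_k$ together with the upper power condition on $f$ (and hence on $u_k$, up to constants) implies $f^{-1}(\varepsilon)\simeq u_k^{-1}(\varepsilon)$ as $\varepsilon\to 0$; this is the routine step where comparability of two increasing functions satisfying a uniform power bound transfers to comparability of their inverses, and I expect it to be the one genuinely technical lemma needed here (it is essentially the content of the commented-out Proposition on asymptotics of inverses).

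Next, $(ii)\Rightarrow(iii)$ is essentially bookkeeping once the monotonicity of the inverted scale is in hand. By Proposition~\ref{inverse}-type estimates (the inverted scale $\{u_\ell^{-1},\ldots,u_1^{-1}\}$ is again ordered by flatness, because all relevant $u_j$ satisfy the upper power condition), if $|A_\varepsilon(S^g(x_0))|\simeq u_k^{-1}(\varepsilon)$ then the ratio $|A_\varepsilon(S^g(x_0))|/u_i^{-1}(\varepsilon)$ tends to $+\infty$ for $i<k$, to $0$ for $i>k$, and to a finite positive constant for $i=k$; hence the generalized Minkowski contents (upper and lower alike) jump from $+\infty$ to $0$ exactly at $i=k$, which is precisely $m(g,\I)=k$. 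Conversely, $(iii)\Rightarrow(i)$: if $m(g,\I)=k$, then by Lemma~\ref{doubling}.$i)(b)$ (cited in the text) the generalized content is a genuine step function with a single jump, so $|A_\varepsilon|$ is squeezed between constant multiples of $u_{k}^{-1}(\varepsilon)$ and $u_{k-1}^{-1}(\varepsilon)$; feeding this back through Theorem~\ref{gensaus} forces $f\simeq u_k$, i.e.\ the first nonzero generalized derivative of $f$ is $D_k(f)(0)>0$ (positivity coming from $f>0$ and $u_k>0$). This closes the three-way equivalence.

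For the final statement, assume first only $(i)$. Then $\alpha_{k_0}(\lambda_0)=\alpha_k(\lambda_0)$ is the first nonzero coefficient in the development of $f=f_{\lambda_0}$, so Lemma~\ref{multgen} (first part) applies verbatim and gives $\mu_0(f_{\lambda_0},\mathcal{F})\le k$, i.e.\ $\mu_0^{fix}(g,\mathcal{G})\le k$ by the identity $\mu_0^{fix}(g_{\lambda_0},\mathcal{G})=\mu_0(f_{\lambda_0},\mathcal{F})$ from Definition~\ref{mult}. If in addition $\Lambda\subset\mathbb{R}^N$, $k\le N$, and the Jacobian matrix $(\partial\alpha_i/\partial\lambda_j)_{i=0\ldots k-1,\ j=1\ldots k}(\lambda_0)$ has maximal rank $k$, then the second part of Lemma~\ref{multgen} upgrades this to the equality $\mu_0^{fix}(g,\mathcal{G})=k$, giving $(iv)$; and since $(i)$ is equivalent to $(ii)$ and $(iii)$, so is $(iv)$ under the regularity assumption. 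I expect the main obstacle to be purely the inverse-comparability step in $(i)\Rightarrow(ii)$ and $(iii)\Rightarrow(i)$ — i.e.\ making precise that, under the upper power condition, $F\simeq G$ implies $F^{-1}\simeq G^{-1}$ for increasing $F,G$ near $0$ — while everything else is either a direct appeal to Theorem~\ref{gensaus}, to Lemma~\ref{doubling}, or to Lemma~\ref{multgen}.
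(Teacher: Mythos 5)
Your proposal follows essentially the same route as the paper's proof: deduce $f\simeq u_k$ from the development, apply Theorem~\ref{gensaus} to get $|A_\varepsilon(S^g(x_0))|\simeq f^{-1}(\varepsilon)$, transfer comparability to the inverses, use the separation of the inverted scale to read off $m(g,\I)=k$, and invoke Lemma~\ref{multgen} for the multiplicity statements. One correction: the transfer step $f\simeq u_k\Rightarrow f^{-1}\simeq u_k^{-1}$ is Lemma~\ref{doubling}.$ii)$ and rests on the \emph{lower} power condition (sublinearity), not the upper one as you claim --- the paper's own counterexample $f(x)=-1/\log x$, $g(x)=-1/(2\log x)$ shows the upper power condition alone does not give comparability of inverses; the upper power condition is instead what powers Lemma~\ref{doubling}.$i)$, i.e.\ the ordering of the inverted scale and the converse implications (both hypotheses are assumed in the theorem, so your argument still goes through once the attribution is fixed).
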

On the importance of the upper power condition in Theorem~\ref{chebsaus}, see Remark~\ref{upperpower} in Appendix.

\bigskip

In the differentiable case, we notice that differentiation diminishes critical Minkowski order by 1. Let $f\in \Diff^r[0,d)$ be differentiable enough and suppose $\mu_0(f')>1$. Put $g=$id$-f$ and $h=$id$-f'$, then by Theorem~\ref{neveda} and Remark~\ref{gbd}.ii) we have $$m(h,\I)=m(g,\I)-1,$$ where $\mathcal{I}=\{1,x,x^2,\ldots,x^r\}$ is a differentiable Chebyshev scale.

The same property is valid in the non-differentiable case when $f$ has asymptotic development of some order in a Chebyshev scale, if the derivative is substituted by the generalized derivative in that scale. The following corollary is a direct consequence of Theorem~\ref{chebsaus}:
\begin{corollary}[behavior of the critical Minkowski order under differentiation]
Let $\mathcal{I}=\{u_0,u_1,\ldots,u_k\}$ be a Chebyshev scale and let $D_1(\I)$ denote the Chebyshev scale of first generalized derivatives of $\I$, that is, $D_1(\I)=\{D_1(u_1), D_1(u_2),\ldots,D_1(u_k)\}$. Suppose that $f$ has an asymptotic development in the scale $\I$ of order $k$ on $(0,d)$ and let $f$ and $D_1(f)$ satisfy assumptions from Theorem~\ref{gensaus} and the upper power condition. Let $g=$id$-f$, $h=$id$-D_1(f)$. It holds
$$
m(h,D_1(\I))=m(g,\I)-1.
$$ 
\end{corollary}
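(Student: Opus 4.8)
The plan is to apply Theorem~\ref{chebsaus} to both pairs $(f,\mathcal{I})$ and $(D_1(f),D_1(\mathcal{I}))$ and compare the resulting critical Minkowski orders. First I would check that $D_1(\mathcal{I})=\{D_1(u_1),\ldots,D_1(u_k)\}$ is indeed a Chebyshev scale of order $k-1$; this is essentially the defining property of the division–differentiation algorithm in Definition~\ref{cheb}, since the operators $D_i$ associated to $D_1(\mathcal{I})$ are, up to reindexing, the operators $D_{i+1}$ associated to $\mathcal{I}$. Consequently the generalized derivatives satisfy $D_i(D_1(f)) = D_{i+1}(f)$ for all relevant $i$. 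Next, since $f$ has a development of order $k$ in $\mathcal{I}$ and $D_1(f)$ satisfies the hypotheses of Theorem~\ref{gensaus} and the upper power condition, $D_1(f)$ has a development of order $k-1$ in $D_1(\mathcal{I})$, so Theorem~\ref{chebsaus} applies to $h=\mathrm{id}-D_1(f)$ with the scale $D_1(\mathcal{I})$.

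Now suppose $m(g,\mathcal{I})=j$ for some $1\le j\le k$. By the equivalence $(iii)\Leftrightarrow(i)$ in Theorem~\ref{chebsaus}, this means $D_i(f)(0)=0$ for $i=0,\ldots,j-1$ and $D_j(f)(0)>0$, i.e. $f\simeq u_j$. Translating via $D_i(D_1(f))=D_{i+1}(f)$, we get $D_i(D_1(f))(0)=0$ for $i=0,\ldots,j-2$ and $D_{j-1}(D_1(f))(0)>0$, i.e. $D_1(f)\simeq D_1(u_j)$. Applying the equivalence $(i)\Leftrightarrow(iii)$ of Theorem~\ref{chebsaus} in the scale $D_1(\mathcal{I})$ then yields $m(h,D_1(\mathcal{I}))=j-1$, which is the claim. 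The only mild case distinction is $j=1$, where $D_1(f)(0)=D_0(D_1(f))(0)>0$ means the development of $D_1(f)$ starts at index $0$; one should note that in this boundary situation $m(h,D_1(\mathcal{I}))=0$, consistent with $\mu_0^{fix}(h)$ being $0$ (no bifurcating zeros), and check that Definition~\ref{generaldim} is read correctly here — the $\max$ is over $i\ge 1$, so one must verify the convention that an empty set of indices with positive content gives $0$.

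I expect the main (though modest) obstacle to be the verification that $D_1(\mathcal{I})$ genuinely inherits the Chebyshev-scale axioms (i)–(iii) of Definition~\ref{cheb} and, more importantly, that $D_1(f)$ inherits the standing hypotheses needed to invoke Theorem~\ref{chebsaus}: namely that $D_1(f)$ is sublinear and satisfies the upper power condition. The corollary's statement builds these into its hypotheses, so strictly the proof only has to assemble the pieces; but it is worth remarking that sublinearity of $f$ does not automatically propagate to $D_1(f)$, which is why the hypothesis is stated separately. With these ingredients in place, the argument is a one-line bookkeeping identity $D_i\circ D_1 = D_{i+1}$ combined with two invocations of Theorem~\ref{chebsaus}, and no genuine estimation is needed.
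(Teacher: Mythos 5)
Your proposal is correct and follows exactly the route the paper intends: the paper offers no written proof beyond calling the corollary ``a direct consequence of Theorem~\ref{chebsaus}'', and your argument --- the reindexing identity $\tilde D_i(D_1(f))=D_{i+1}(f)$ for the derived scale plus two applications of the equivalence $(i)\Leftrightarrow(iii)$ --- is that direct consequence. Your worry about the boundary case $j=1$ is moot, since the hypothesis that $D_1(f)$ satisfies the assumptions of Theorem~\ref{gensaus} forces $D_1(f)(0)=(D_1(f))'(0)=0$, which already excludes $m(g,\mathcal{I})=1$.
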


\bigskip 
Finally, let us explain the gain of introducing the notion of critical Minkowski order with respect to some Chebyshev scale over the standard box dimension. As we have mentioned before, the role of the box dimension and Minkowski content was to measure density of the orbit $S^g(x_0)$ around the fixed point $0$ by determining the rate of growth of $|A_{\varepsilon}(S^g(x_0))|$ as $\varepsilon\to 0$. If we consider, for example, functions $f_1(x)=x^k$ and $f_2(x)=x^k (-\log x)$, $k>1$, and compute standard box dimension of the orbits generated by $g_1=id-f_1$ and $g_2=id-f_2$, we get in both cases 
$$\dim_B(g_1)=\dim_B(g_2)=1-\frac{1}{k}.$$
Thus box dimension is equal for two functions obviously distinct in growth. This is unnatural, the difference can be noticed in the upper (lower) $(1-1/k)-$Minkowski content $\mathcal{M}^{1-1/k}$, which is zero for $g_2$ and greater than zero for $g_1$, thus signaling that the orbit generated by $g_1$ has bigger density around $0$ than the one generated by $g_2$. The reason lies in the fact that the box dimension and the Minkowski content are defined in a way that compares functions to power functions, but the functions $f_2(x)=x^k (-\log x)$  are \emph{not visible} in the scale of power functions. Precisely, by Theorem~\ref{gensaus}, $|A_{\varepsilon}(S^{g_2}(x_0))|\simeq f_2^{-1}(\varepsilon)$ does not behave as power of $\varepsilon$, but satisfies $\varepsilon^{1/k}<|A_{\varepsilon}(S^{g_2}(x_0))| <\varepsilon^{1/(k+\delta)}$, for every $\delta>0$, as $\varepsilon$ tends to 0. Therefore there is not much sense in comparing the lenght to powers of $\varepsilon$ as in the standard definition of box dimension. We should define some other Chebyshev scale in which $f_1$ and $f_2$ both have developments, for example $\mathcal{I}=\{1,x(-\log x),x,x^2(-\log x),x^2,\ldots\}$, and consider critical Minkowski orders with respect to this new scale instead of box dimensions. Then, we obtain distinct numbers: 
 $$m(g_1,\mathcal{I})=2k,\  m(g_2,\mathcal{I})=2k-1.$$
Therefore critical Minkowski order with respect to the appropriate scale is a more precise measure for density of the orbit in non-differentiable case then the box-dimension.

\section{Proof of the main theorems}\label{sec3}
In the proof of Theorem~\ref{gensaus} and of Theorem~\ref{chebsaus} we need the following lemma:
\begin{lemma}[Inverse property]\label{doubling} 
Let $d>0$ and let $f, g\in C^1(0,d)$ be positive, strictly increasing functions on $(0,d)$.
\begin{enumerate}
\item[$i)$] If there exists a positive constant $M>0$ such that the upper power condition holds, \begin{equation}\label{up}x\cdot(\log f)'(x)\leq M,\ x\in(0,d),\end{equation} then
\begin{eqnarray}\label{sim}
&(a)&f^{-1}(y)\simeq g^{-1}(y),\text{ as }y\to 0 \text{ implies } f(x)\simeq g(x),\text{ as }x\to 0.\\
&(b)&\lim_{x\to 0}\frac{f(x)}{g(x)}=0\ (+\infty) \text{ implies } \lim_{y\to 0}\frac{f^{-1}(y)}{g^{-1}(y)}=+\infty\ (0).
\end{eqnarray}
\item[$ii)$] If there exists a positive constant $m>0$ such that the lower power condition holds, \begin{equation}\label{low}m\leq x\cdot(\log f)'(x),\ x\in(0,d),\end{equation} then
\begin{equation}\label{inverse}
f(x)\simeq g(x),\text{ as }x\to 0, \text{ implies } f^{-1}(y)\simeq g^{-1}(y),\text{ as }y\to 0.
\end{equation}
\end{enumerate}
\end{lemma}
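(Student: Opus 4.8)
The plan is to work entirely with the logarithmic derivative condition and the substitution $y = f(x)$, so that ratios of inverse functions become ratios of $x$-values while ratios of the original functions become ratios of $y$-values. Throughout, since $f,g$ are positive, strictly increasing and continuous on $(0,d)$ with limit $0$ at $0$, both inverses $f^{-1},g^{-1}$ are well-defined, positive, strictly increasing near $0$, and $y\to 0$ corresponds to $x\to 0$.

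For part $i)(a)$: assume $f^{-1}(y)\simeq g^{-1}(y)$ as $y\to 0$, i.e.\ there are constants $0<A\le B$ with $A\,g^{-1}(y)\le f^{-1}(y)\le B\,g^{-1}(y)$ for $y$ small. Write $x=f^{-1}(y)$, so $y=f(x)$ and $g^{-1}(y)\in[x/B,\,x/A]$, hence $y=g(g^{-1}(y))\le g(x/A)$ and $y\ge g(x/B)$ (using monotonicity of $g$, after shrinking $d$ so $x/A,x/B\in(0,d)$). Thus $g(x/B)\le f(x)\le g(x/A)$. Now I would use the upper power condition \eqref{up} on $g$ to compare $g(x/A)$ with $g(x)$: integrating $(\log g)'(t)\le M/t$ from $x$ to $Ax$ (when $A>1$; handle $A\le1$ symmetrically) gives $\log\frac{g(Ax)}{g(x)}\le M\log A$, i.e.\ $g(Ax)\le A^{M}g(x)$, and similarly for arbitrary bounded multiplicative factors. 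Hence $g(x/B)\simeq g(x)\simeq g(x/A)$, and combining with the squeeze above yields $f(x)\simeq g(x)$. The only subtlety is that the upper power condition is stated for $f$, not $g$; but under the hypothesis $f^{-1}\simeq g^{-1}$ one shows $g$ inherits an analogous bound, or more cleanly, one applies the condition to whichever of $f,g$ carries it — here I would state and use it for $g$ as well, noting that the hypothesis in the intended application (Theorems~\ref{gensaus},~\ref{chebsaus}) supplies it for both the given function and the monomials. Part $i)(b)$ is the same argument with "$\le A$ below, $\le B$ above" replaced by "$\to 0$": if $f(x)/g(x)\to 0$ then for every $c>0$ eventually $f(x)\le g(x/c')$ for a suitable $c'$, giving $f^{-1}(y)/g^{-1}(y)\to\infty$; one runs the estimate with $c\to\infty$.

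For part $ii)$: assume $f(x)\simeq g(x)$, say $A\,g(x)\le f(x)\le B\,g(x)$. Put $y=f(x)$, so $x=f^{-1}(y)$ and $A\,g(x)\le y\le B\,g(x)$, hence $g(x)\in[y/B,\,y/A]$, so $g^{-1}(y/B)\le x\le g^{-1}(y/A)$, i.e.\ $g^{-1}(y/B)\le f^{-1}(y)\le g^{-1}(y/A)$. To convert this into $f^{-1}(y)\simeq g^{-1}(y)$ I need to compare $g^{-1}(y/A)$ and $g^{-1}(y/B)$ with $g^{-1}(y)$, and this is exactly where the lower power condition \eqref{low} enters: $m\le t\,(\log g)'(t)$ means $(\log g^{-1})'(y)\le 1/(m\, y)$ by the inverse-function derivative rule (if $x=g^{-1}(y)$ then $(\log g^{-1})'(y)=\frac{1}{y}\cdot\frac{g(x)}{x\,g'(x)}\le\frac{1}{my}$), so integrating from $y/B$ to $y$ gives $g^{-1}(y)\le B^{1/m}g^{-1}(y/B)$ and similarly $g^{-1}(y/A)\le A^{-1/m}g^{-1}(y)$ type bounds for bounded factors. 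Hence $g^{-1}(y/B)\simeq g^{-1}(y)\simeq g^{-1}(y/A)$, and the squeeze gives $f^{-1}(y)\simeq g^{-1}(y)$. Again the lower power condition is stated for $f$; the clean route is to remark that $f\simeq g$ together with \eqref{low} for $f$ forces the analogous bound for $g$ (comparable functions with the same rough logarithmic derivative), or simply to apply \eqref{low} to $g$, which is what the applications provide.

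The main obstacle is bookkeeping about \emph{which} function the power condition is assumed for, and making the passage "$f\simeq g$ implies their logarithmic derivatives are comparably bounded" rigorous without circularity; I expect to resolve it by proving the elementary sub-lemma that, under \eqref{up} (resp.\ \eqref{low}) for $f$, one has $f(cx)\simeq f(x)$ (resp.\ the inverse statement), and then noting that the squeeze estimates only ever require comparing $g$ at $y$ with $g$ at $y$ times a bounded factor, so the condition is needed for $g$ — which in every use of this lemma in Section~\ref{sec3} is available because $g$ there is either $f$ itself or a monomial $u_i$ assumed to satisfy the relevant power condition. The routine integration estimates $f(cx)\le c^{M}f(x)$ and $g^{-1}(cy)\le c^{1/m}g^{-1}(y)$ I would state once and reuse.
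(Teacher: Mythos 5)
Your core mechanism --- integrate the power condition to get the doubling estimate $f(Cx)\le C^{M}f(x)$ for bounded $C$, then squeeze after a monotone substitution --- is exactly the paper's. But your bookkeeping of which function carries the power condition is a genuine gap, not a mere subtlety. In $i)(a)$ you substitute $x=f^{-1}(y)$, which produces the sandwich $g(x/B)\le f(x)\le g(x/A)$ and therefore requires the doubling property of $g$; likewise in $ii)$ your estimate $(\log g^{-1})'(y)\le 1/(my)$ uses the lower power condition for $g$. The lemma assumes the condition only for $f$, and neither of your proposed remedies closes this. The claim that $f^{-1}\simeq g^{-1}$ (or $f\simeq g$) "forces the analogous bound for $g$" is false for the pointwise condition $x(\log g)'(x)\le M$ (resp.\ $\ge m$): comparability controls nothing about $g'$ at individual points, and $x(\log g)'(x)$ can oscillate unboundedly (resp.\ dip toward $0$) while $g\simeq f$ holds. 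And "apply the condition to $g$ as well" silently strengthens the hypotheses of the lemma.

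The repair is cheap and is what the paper does: substitute on the other side. In $i)(a)$ put $x=g^{-1}(y)$, so the sandwich reads $f(Ax)\le g(x)\le f(Bx)$ and only the doubling of $f$ --- available from \eqref{up} --- is needed: $g(x)\le f(Bx)\le m_{B}f(x)$ and $f(x)\le m_{1/A}f(Ax)\le m_{1/A}g(x)$. (Alternatively your sandwich can be saved by noting that the \emph{integrated} doubling property, unlike the pointwise condition, does transfer through it: $g(Cz)\le f(BCz)\le m_{BC/A}f(Az)\le m_{BC/A}g(z)$ --- but this must be said explicitly rather than appealing to an inherited derivative bound.) For $ii)$ the paper simply observes that \eqref{low} for $f$ is \eqref{up} for $f^{-1}$ and applies $i)(a)$ to the pair $(f^{-1},g^{-1})$, which never touches $g$. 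Your sketch of $i)(b)$ has the same left/right issue ("$f(x)\le g(x/c')$" again needs doubling of $g$); keep the constant inside $f$: for any $K$, eventually $f(Kx)\le m_{K}f(x)\le m_{K}\varepsilon\, g(x)\le g(x)$ with $\varepsilon=1/m_{K}$, hence $f^{-1}(y)\ge K g^{-1}(y)$. (The paper argues $(b)$ by contradiction via $\liminf$ and $\limsup$; your direct route is fine once the constant sits on the correct side.)
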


\begin{proof}\ \

$i) a)$ From $f^{-1}\simeq g^{-1}$ we have that there exist constants $A<1$, $B>1$ and $\delta>0$ such that
$$
Ag^{-1}(y)\leq  f^{-1}(y)\leq Bg^{-1}(y) ,\ y\in(0,\delta).
$$
Putting $x=g^{-1}(y)$ and applying $f$ (strictly increasing) on the above inequality we get that there exists $\delta_1>0$ such that
\begin{equation}\label{eqq}
f(Ax)\leq g(x)\leq f(Bx), \ x\in(0,\delta_1).
\end{equation}
For each constant $C>1$ we have, for small enough $x$, 
\begin{eqnarray}\label{doup}
\log f(Cx)-\log f(x)&=&(\log f)'(\xi)(C-1)x\nonumber \\
&<&(\log f)'(\xi)(C-1)\xi,\quad \xi\in(x,Cx).
\end{eqnarray}
Combining $(\ref{up})$ and $(\ref{doup})$, we get that there exist constants $m_C>1$ and $d_C>0$ such that 
\begin{equation}\label{doubl}
\frac{f(Cx)}{f(x)}\leq m_C,\ x\in(0,d_C).
\end{equation}
Now using property $(\ref{doubl})$ and inequality $(\ref{eqq})$, for small enough $x$ we obtain
$$
\frac{1}{m_{1/A}}f(x)\leq g(x)\leq m_{B}f(x),
$$
i.e. $f(x)\simeq g(x)$, as $x\to 0$.

$i)\ b)$
Suppose $\lim_{x\to 0}\frac{f(x)}{g(x)}=+\infty$. We prove that $\lim_{y\to 0}\frac{f^{-1}(y)}{g^{-1}(y)}=0$ by proving that limit superior and limit inferior are equal to zero. Suppose the contrary, that is,
$$
\liminf_{y\to 0}\frac{f^{-1}(y)}{g^{-1}(y)}=M, \text{ for some $M>0$ or $M=\infty$}.
$$
By definition of limit inferior, there exists a sequence $y_n\to 0$, as $n\to\infty$, such that \begin{equation}\label{tend}\frac{f^{-1}(y_n)}{g^{-1}(y_n)}\to M, \text{ as }n\to\infty. \end{equation}
From \eqref{tend} it follows that there exist $n_0\in\mathbb{N}$ and $C>0$ such that
\begin{equation}\label{invineq}
g^{-1}(y_n)<C f^{-1}(y_n),\ n\geq n_0.
\end{equation}
Now, as in $i)(a)$, by a change of variables $x_n=g^{-1}(y_n)$, $x_n\to 0$, and applying $f$ (strictly increasing) on \eqref{invineq}, we get 
$$
m_C\ g(x_n)\geq f(x_n),\ n\geq n_0,\ x_n\to 0,\quad \text{ for }m_C>0,
$$
which is obviously a contradiction with $\lim_{x\to 0}\frac{f(x)}{g(x)}=+\infty$. Therefore 
$$
\liminf_{y\to 0}\frac{f^{-1}(y)}{g^{-1}(y)}=0.
$$
It can be proven in the same way that limit superior is equal to zero.

Now suppose $\lim_{x\to 0}\frac{f(x)}{g(x)}=0$. In the same way as above, we prove that $\lim_{y\to 0}\frac{g^{-1}(y)}{f^{-1}(y)}=0$.

\medskip

$ii)$ It is easy to see by change of variables $x=f^{-1}(y)$ that property $(\ref{low})$ of $f$ is equivalent to property $(\ref{up})$ of $f^{-1}$ and the statement follows from $i)$.
\end{proof}

\begin{remark}[Counterexamples in Lemma~\ref{doubling}]
To show that in Lemma \ref{doubling}. $i)$ the upper power condition $(\ref{up})$ is important, we can take, for example, functions $f(x)=e^{-\frac{1}{2x}}$ and $g(x)=e^{-\frac{1}{x}}$. They do not satisfy $(\ref{up})$ and, obviously,
$$
\lim_{x\to 0}\frac{f(x)}{g(x)}=\infty,\ \text{\ but\ } f^{-1}(y)=-\frac{1}{2\log y}\ \simeq\ g^{-1}(y)=-\frac{1}{\log y}.
$$
We can do the same for the lower power condition $(\ref{low})$ in Lemma \ref{doubling}. $ii)$ by considering, for example, $f(x)=-\frac{1}{\log x}$ and $g(x)=-\frac{1}{2\log x}$.
\end{remark}

\medskip

\noindent \emph{Proof of Theorem \ref{gensaus}}.\

From lower power condition together with $f'(0)=0$, we get that $f(x)=o(x)$ and that $f(x)$ is strictly increasing on $(0,d)$. It can easily be checked that $x_n\to 0$ and $d(x_n,x_{n+1})\to 0$, as $n\to\infty$. Denote by $N_\varepsilon$ and $T_\varepsilon$ nucleus  and tail of the $\varepsilon-$neighborhood of the sequence, that are $\varepsilon-$neighborhoods of two subsets of the orbit satisfying the inequality $ d(x_n,x_{n+1})\leq 2\varepsilon $ for the nucleus, and  $d(x_n,x_{n+1})>2\varepsilon$ for the tail.
 Therefore,
\begin{equation}\label{area}
|A_\varepsilon(S^g(x_1))=|N_\varepsilon(S^g(x_1))|+|T_\varepsilon(S^g(x_1))|,
\end{equation}
where $|N_\varepsilon|$ is the length of the nucleus, and $|T_\varepsilon|$ the length of the tail of the $\varepsilon$-neighborhood. For more on notions of the tail and the nucleus of the $\varepsilon-$neighborhood of a set, see Tricot \cite{tricot}.

To compute the length, we have to find the index $n_\varepsilon\in\mathbb{N}$ such that
\begin{equation}\label{eps} 
f(x_{n_\varepsilon})< 2\varepsilon,\  f(x_{n_\varepsilon-1})\geq 2 \varepsilon,
\end{equation}
that is, the smallest index $n_\varepsilon$ such that $\varepsilon-$neighborhoods of the points $x_{n_\varepsilon},\ x_{n_\varepsilon+1},\ $etc start to overlap.

Then we have
\begin{eqnarray}
|N_\varepsilon|&=&x_{n_\varepsilon}+\varepsilon\label{nucleus},\\
|T_\varepsilon|&\simeq& n_\varepsilon\cdot\varepsilon.\label{tail}
\end{eqnarray}

First we estimate $|N_{\varepsilon}|$.
From $f(x)=o(x)$ we get
\begin{equation}\label{behinv}
\lim_{y\to 0}\frac{y}{f^{-1}(y)}=0.
\end{equation}
Since $f^{-1}$ is strictly increasing, from $(\ref{eps})$ we easily get $x_{n_\varepsilon}\simeq f^{-1}(2\varepsilon)$. Since $f$ satisfies the lower power condition, by Lemma \ref{doubling}.$ii)$ it follows $x_{n_{\varepsilon}}\simeq f^{-1}(\varepsilon)$. This, together with $(\ref{nucleus})$ and $(\ref{behinv})$, implies $|N_\varepsilon|\simeq f^{-1}(\varepsilon)$.

Now let us estimate the length of the tail, $|T_\varepsilon|$, by estimating $n_\varepsilon$.\\ 
Putting $\Delta x_n:=x_{n}-x_{n+1}$, from $x_{n+1}-x_n=-f(x_n)$ we get
\begin{equation}\label{nepsil}
\frac{\Delta x_n}{f(x_n)}=1 \text{ and }\sum_{n=n_0}^{n_\varepsilon}\frac{\Delta x_n}{f(x_n)}=\sum_{n=n_0}^{n_\varepsilon}1=n_{\varepsilon}-n_0\simeq n_\varepsilon, \text{ as }\varepsilon\to 0,
\end{equation}
for some fixed $n_0\in\mathbb{N}$. 

As in \eqref{relat} below, we get that $\frac{x_{n+1}}{x_n}$ tends to $1$, as $n$ tends to infinity, and thus we can choose the integer $n_0$ so that 
\begin{equation}\label{nzero}
Af(x_{n+1})<f(x_n)<Bf(x_{n+1}),\ \ n\geq n_0,
\end{equation}
for some constants $A,\ B>0$.

\begin{figure}
\centering
\hspace{-0.5cm}
\includegraphics[scale=0.6]{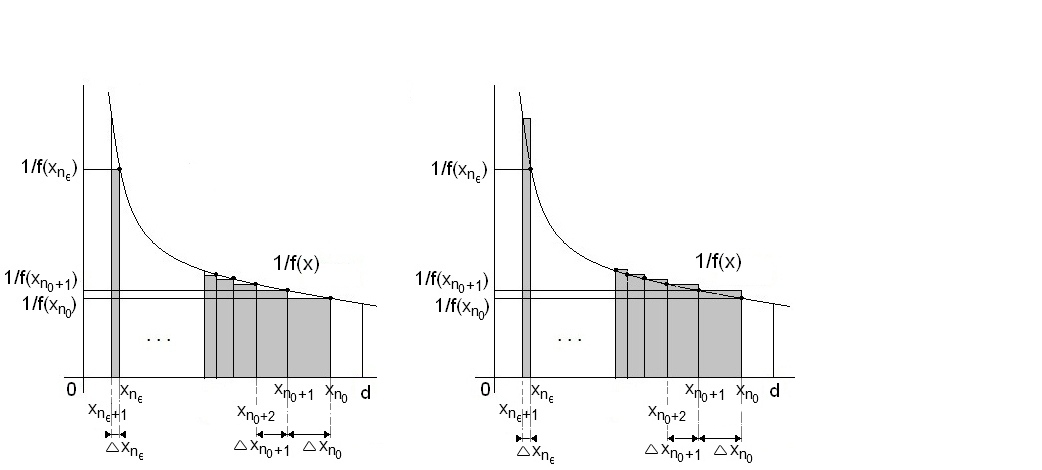}
\caption{\scriptsize Sums from \eqref{intap} as sums of areas of rectangles.}
\label{figproof}
\end{figure}

Since the function $\frac{1}{f(x)}$ is strictly decreasing on $(0,d)$ and $\lim_{x\to 0}\frac{1}{f(x)}=+\infty$, the sum $\sum_{n=n_0}^{n_\varepsilon}\frac{\Delta x_n}{f(x_n)}$ is equal to the sum of the areas of the rectangles in Figure~\ref{figproof}.$i)$ and, analogously, the sum $\sum_{n=n_0}^{n_\varepsilon}\frac{\Delta x_n}{f(x_{n+1})}$ is equal to the sum of the areas of the rectangles in Figure~\ref{figproof}.$ii)$. Therefore we have the following inequality:
\begin{equation}\label{intap}
\sum_{n=n_0}^{n_\varepsilon}\frac{\Delta x_n}{f(x_n)}\ \leq \int_{x_{n_{\varepsilon}+1}}^{x_{n_0}}\frac{dx}{f(x)} \leq \sum_{n=n_0}^{n_\varepsilon}\frac{\Delta x_n}{f(x_{n+1})}.
\end{equation}
From \eqref{nzero}, we get
\begin{equation}\label{upp}
\sum_{n=n_0}^{n_\varepsilon} \frac{\Delta x_n}{f(x_{n+1})}<B\sum_{n=n_0}^{n_\varepsilon}\frac{\Delta x_n}{f(x_n)},
\end{equation}
so finally, putting \eqref{upp} in \eqref{intap} and using \eqref{nepsil}, we get the following estimate for $n_\varepsilon$:
\begin{equation}\label{int}
n_\varepsilon\simeq \int_{x_{n_\varepsilon+1}}^{x_{n_0}}\frac{dx}{f(x)},\text { as }\varepsilon\to 0.
\end{equation}

Substituting $x=f^{-1}(y)$, from the lower power condition we get $$\frac{f^{-1}(y)}{y^2}\geq m\frac{(f^{-1})'(y)}{y}$$ and, consequently, for $y\in(0,f(d))$,
\begin{equation}\label{estim}
-\left(\frac{f^{-1}(y)}{y}\right)'=-\frac{(f^{-1})'(y)}{y}+\frac{f^{-1}(y)}{y^2}\geq (m-1)\cdot\frac{(f^{-1})'(y)}{y}.
\end{equation}

Now substitution $x=f^{-1}(s)$ in the integral $(\ref{int})$ together with $(\ref{estim})$ gives
\begin{equation}\label{neps}
n_\varepsilon\simeq \int_{f(x_{n_\varepsilon+1})}^{f(x_{n_0})}\frac{(f^{-1})'(s) ds}{s}\leq \frac{1}{m-1}\left(-\frac{f^{-1}(s)}{s}\right) \Big{|}_{f(x_{n_\varepsilon+1})}^{f(x_{n_0})}.
\end{equation}
It holds
\begin{eqnarray*}
\frac{f(x_{n_\varepsilon})}{f(x_{n_\varepsilon-1})}&=&\frac{f(x_{n_\varepsilon-1}-f(x_{n_\varepsilon-1}))}{f(x_{n_\varepsilon-1})}=\\
&=&\frac{f(x_{n_\varepsilon-1})+f'(\xi_\varepsilon)(-f(x_{n_\varepsilon-1}))}{f(x_{n_\varepsilon-1})}=1-f'(\xi_\varepsilon),
\end{eqnarray*}
for some $\xi_\varepsilon\in(x_{n_\varepsilon},x_{n_{\varepsilon}-1})$, so $f'(0)=0$ implies
\begin{equation}\label{relat}
\lim_{\varepsilon\to 0}\frac{f(x_{n_\varepsilon})}{f(x_{n_\varepsilon-1})}=1.
\end{equation} 
From $(\ref{eps})$ and $(\ref{relat})$, we now conclude that $f(x_{n_{\varepsilon}+1})\simeq \varepsilon$.
 Therefore $(\ref{neps})$ becomes
$$n_\varepsilon\leq C  \frac{f^{-1}(\varepsilon)}{\varepsilon},$$
for some $C>0$.
From \eqref{tail}, we have that $$|T_\varepsilon|\simeq n_\varepsilon\cdot\varepsilon\leq C_1\cdot f^{-1}(\varepsilon),$$ for some $C_1>0$ and $\varepsilon$ small enough. Together with $|N_\varepsilon|\simeq f^{-1}(\varepsilon)$ obtained above, this implies, using \eqref{area}, that
$$
|A_\varepsilon(S^g(x_1))|\simeq f^{-1}(\varepsilon), \text{ as }\varepsilon\to 0.
$$\qed

\bigskip

\noindent \emph{Proof of Theorem \ref{chebsaus}}.

We first prove that $(i)\Rightarrow (ii) \Rightarrow (iii)$. Suppose $D_i(f)(0)=0,\linebreak i=0,\ldots,k-1,\ D_k(f)(0)>0$, i.e., $f\simeq u_k$, as $x\to 0$. Theorem~\ref{gensaus} applied to $f$ gives $|A_\varepsilon(S^g(x_1))|\simeq f^{-1}(\varepsilon)$. Since $f\simeq u_k$, by Lemma~\ref{doubling}.$ii)$ we get $f^{-1}\simeq u_k^{-1}$ and therefore $|A_\varepsilon(S^g(x_1))|\simeq u_k^{-1}(\varepsilon)$. Since $u_k$ satisfies upper power condition, by Lemma~\ref{doubling} and Definition~\ref{generaldim} of critical Minkowski order (see Remark~\ref{gbd}.$ii)$), we get $m(g,\mathcal{I})=k$.

Now we prove that $(iii)\Rightarrow (ii) \Rightarrow (i)$. Suppose $m(g,\I)=k$ and $f\simeq u_l$, for some $l\neq k$. As above, we conclude $m(g,\mathcal{I})=l\neq k$, which is a contradiction. Therefore $f\simeq u_k$ and, again as above, $|A_\varepsilon(S^g(x_1))|\simeq u_k^{-1}(\varepsilon)$.  

By Lemma~\ref{multgen}, we conclude that $(i)$ implies $\mu_0^{fix}(g,\mathcal{G})\leq k$.
If moreover the condition on the maximal rank of the matrix $(\frac{\partial \alpha_i}{\partial \lambda_j})_{i=0 \ldots k-1,\ j=1\ldots k}(\lambda_0)$ is verified, by Lemma \ref{multgen} $(i)$ is equivalent to $(iv)$.

\hfill$\Box$

\section{Applications}\label{sec4}

\subsection{Cyclicity of limit periodic sets for planar systems}\

The number of limit cycles that bifurcate from a monodromic limit periodic set in an unfolding is equal to the multiplicity of the isolated fixed point $x=0$ of the Poincar\' e map in the family of Poincar\' e maps for the given unfolding, see e.g. Proposition 2 in \cite{dumortier}. For exact definitions of limit periodic set and cyclicity, see e.g. Roussarie \cite{roussarie}.

Results from Section \ref{sec2} connect cyclicity of a limit periodic set in an unfolding and the rate of growth of the length of the $\varepsilon$-neighborhood of any orbit of the Poincar\' e map whose initial point is sufficiently close to the limit periodic set. This rate of growth is given by the critical Minkowski order with respect to the appropriate scale.

The orbit of the Poincar\' e map on a transversal to the limit periodic set is the intersection of the corresponding one-dimensional orbit of the vector field with the
transversal. Locally in a neighborhood of a point on the transversal, the structure of the one-dimensional orbit is that of the orbit of the Poincar\' e map by a segment.
Hence all interesting data of the one-dimensional orbit are given by the corresponding zero-dimensional orbit of the Poincar\' e map. 

Therefore, instead of considering the rate of growth of the area (2-Lebesgue measure) of the $\varepsilon$-neighborhood of an orbit of the field itself, as $\varepsilon\to 0$, which would be more natural in search of cyclicity of a limit periodic set, it is sufficient to consider the rate of growth of the length of the $\varepsilon$-neighborhood of an orbit of its Poincar\' e map.
\medskip

Let $\Gamma$ be the stable limit periodic set for the analytic unfolding $X_\lambda$. We consider only the unfoldings of finite codimension such that the family of Poincar\' e maps $g_\lambda(x),\ x\in[0,d),$ for the unfolding is well defined and different from identity on the transversal to vector field $X_\lambda$ in neighborhood of $\Gamma$. 

Let us recall that the function $f_\lambda$=id$-g_\lambda$ is called the displacement function. The main idea is to find the family of Chebyshev scales $\mathcal{I_\lambda}$ such that the family $f_\lambda$ has a uniform development of some order in the family $\I_\lambda$, as was introduced in Section~\ref{sec2}. Suppose $\Gamma$ is a limit periodic set of $X_{\lambda_0}$, for the parameter value $\lambda=\lambda_0$. Then, by Theorem~\ref{chebsaus}, the critical Minkowski order of $g_{\lambda_0}$ with respect to the scale $\I_{\lambda_0}$, $m(g_{\lambda_0}, I_{\lambda_0})$, is an upper bound on cyclicity of $\Gamma$ in the unfolding $X_\lambda$. 

In the sequel, limit periodic sets are stable limit cycles, nondegenerate stable focus points and stable homoclinic loops. The family of displacement functions $f_\lambda$ for the unfolding has a uniform asymptotic development in a family of appropriate Chebyshev scales and is analytic in $x=0$ in first two cases (differentiable cases) and non-differentiable in $x=0$ in the case of homoclinic loop. 

\subsubsection{Differentiable case, limit cycle}\

Suppose that $X_{\lambda_0}$ has a stable or semistable limit cycle $\Gamma$ and let $X_\lambda$ be an arbitrary analytic unfolding of $X_{\lambda_0}$.\\
There exists neighborhood $W$ of $\lambda_0$ such that the displacement function $f_\lambda$ is analytic on $[0,d)$, for $\lambda\in W$, and $f_{\lambda_0}(0)=0$. Expanding $f_\lambda(x)$ in Taylor series, we get
\begin{equation}\label{tayllimit}
f_\lambda(x)=\alpha_0(\lambda)+\alpha_1(\lambda)x+\alpha_2(\lambda)x^2+\alpha_3(\lambda)x^3+\ldots, \ \lambda\in W.
\end{equation} 

The family $f_\lambda$ has a uniform asymptotic development in the Chebyshev scale $$\I=\{1,x,x^2,\ldots,x^\ell\},$$ of any order $\ell\in\mathbb{N}$. 

By Theorem~\ref{gensaus}, the length of the $\varepsilon$-neighborhood of an orbit of the Poincar\' e map around the limit cycle should be compared to the inverted scale, $\{\varepsilon,\varepsilon^{1/2},\varepsilon^{1/3},\ldots\}$, to obtain an upper bound on the cyclicity. Let $\mathcal{G}=\{g_\lambda=id-f_\lambda\}$ be the family of Poincar\' e maps. By Theorem~\ref{chebsaus}, if $f_{\lambda_0}\simeq x^{k}$, for some $2\leq k\leq\ell$, then $|A_\varepsilon|\simeq \varepsilon^{1/k}$ and the critical Minkowski order of $g_{\lambda_0}$ with respect to $\I_{\lambda_0}$ is equal to $k$, $m(g_{\lambda_0},\I_{\lambda_0})=k$. The cyclicity of the limit cycle in the unfolding $X_\lambda$ is equal to $\mu_0^{fix}(g_{\lambda_0},\mathcal{G})\leq k$. If moreover the unfolding $(X_\lambda)$ is general enough so that the regularity condition from Theorem~\ref{chebsaus} is satisfied, then  the cyclicity $\mu_0^{fix}(g_{\lambda_0},\mathcal{G})=k$.

\subsubsection{Differentiable case, weak focus}\

Suppose $x_0$ is a stable weak focus of $X_{\lambda_0}$(that is, $DX_{\lambda_0}(x_0)$ has two conjugate complex eigenvalues without the real part). Suppose $X_{\lambda}$ is an arbitrary analytic unfolding of $X_{\lambda_0}$.

There exists neighborhood $W$ of $\lambda_0$ such that, for $\lambda \in W$, the displacement function $f_\lambda(x)$ is analytic in $0$ and $f_\lambda(0)=0$. Therefore we can expand $f_\lambda$ in Taylor series around $0$ and, by symmetry argument around focus point, we get that the leading monomials can only be the ones with odd exponents:
\begin{equation}\label{focus}
f_{\lambda}(x)=\beta_1(\lambda)(x+g_1(\lambda,x))+\beta_3(\lambda)(x^3+g_3(\lambda,x))+\beta_5(\lambda)(x^5+g_5(\lambda,x))+\ldots,
\end{equation}
where $g_i(\lambda,x)$ denotes some linear combination of monomials from Taylor expansion of order strictly greater than $x^i$ and with coefficients depending on $\lambda$.

The family of displacement functions $f_\lambda$ has a uniform asymptotic development in a family of Chebyshev scales $\I_{\lambda}$ of some order:
$$
\mathcal{I}_\lambda=\{x+g_1(\lambda,x),\ x^3+g_3(\lambda,x),\ x^5+g_5(\lambda,x),\ldots,\ x^{2\ell+1}+g_{2\ell+1}(\lambda,x)\}.
$$

To obtain an upper bound on the cyclicity of the focus, by Theorem~\ref{chebsaus}, the length of the $\varepsilon$-neighborhood of the discrete orbit of the Poincar\' e map around the origin should be compared to the inverted scale $\{\varepsilon,\ \varepsilon^{1/3},\ \varepsilon^{1/5},\ldots\}$ of $\mathcal{I}_{\lambda_0}$. We proceed as in the example above.



\subsubsection{Non-differentiable case, homoclinic loop}\label{homoclinic}\

Suppose $X_{\lambda_0}$ has a stable homoclinic loop with the hyperbolic saddle point at the origin, with $0x$ as unstable and $0y$ as stable manifold, and such that the ratio of hyperbolicity of the saddle is $r(\lambda_0)=1$ (i.e. $DX_{\lambda_0}(0)$ has eigenvalues of the same absolute value, but of different sign). Suppose $X_\lambda$ is an analytic unfolding of $X_{\lambda_0}$ and that, for $\lambda\in W$, each $X_\lambda$ has a hyperbolic saddle of ratio $r(\lambda)$ at the origin, with the same stable and unstable manifolds.

We consider the family of Poincar\' e maps $\mathcal{G}=g_\lambda$ and the family of displacement functions $f_\lambda=\text{id}-g_\lambda$, $x\in(0,\delta)$, on a transversal to stable manifold near the origin, as in Chapter 5 in Roussarie \cite{roussarie}. The family cannot be extended analytically to $x=0$ due to nondifferentiability in $x=0$ and the following asymptotic expansion in $x=0$ holds instead (see \cite{roussarie}, Section 5.2.2):
\begin{eqnarray}\label{exploop}
f_\lambda(x)&=&\beta_0(\lambda)+\alpha_1(\lambda)[x\omega(x,\alpha_1(\lambda))+g_1(x,\lambda)]+ \nonumber\\
&+&\beta_1(\lambda)x+\alpha_2(\lambda)[x^2\omega(x,\alpha_1(\lambda))+g_2(x,\lambda)]+\beta_2(\lambda)x^2+\ldots+\nonumber \\
&+&\beta_n(\lambda)x+\alpha_n(\lambda)[x^n\omega(x,\alpha_1(\lambda))+g_n(x,\lambda)]+\beta_n(\lambda)x^n+ o(x^n),\nonumber \\
&&\hspace{10cm} n\in\mathbb{N},
\end{eqnarray}
where $\alpha_1(\lambda)=1-r(\lambda)$, $g_i(x,\lambda)$ denotes linear combination in the monomials of the type $x^k\omega^l$ of strictly greater order than $x^i\omega$ (order on monomials is defined by increasing flatness, $x^i \omega^j<x^k \omega^l$ if ($i<k$) or ($i=k$ and $j>l$)) and
$$
\omega (x,\alpha)=\left\{
\begin{array}{ll} \frac{x^{-\alpha}-1}{\alpha} &\text{ if } \alpha\neq 0,\\ -\log x&\text { if }\alpha=0.
\end{array}
\right.$$

The family of displacement functions $f_\lambda$ has obviously an uniform asymptotic development in the following family of Chebyshev scales $\I_\lambda$ of some order:
$$
\mathcal{I}_\lambda=\{1,x\omega(x,\alpha_1(\lambda))+g_1(x,\lambda),x,x^2\omega(x,\alpha_1(\lambda))+g_2(x,\lambda),x^2,\ldots\}.
$$
If we take $\lambda=\lambda_0$ in $(\ref{exploop})$, we get the following expansion for $X_{\lambda_0}$ ($\alpha_1(\lambda_0)=0,\ f_{\lambda_0}(0)=0$):
\begin{eqnarray}\label{pt}
f_{\lambda_0}(x)&=&\beta_1(\lambda_0) x+\alpha_2(\lambda_0)x^2\omega(x,0)+\alpha_3(\lambda_0)x^3\omega(x,0)+...=\nonumber \\
&=&\beta_1(\lambda_0) x+\alpha_2(\lambda_0)x^2(-\log x)+\alpha_3(\lambda_0)x^3(-\log x)+...
\end{eqnarray}

The length of the $\varepsilon$-neighborhood should be compared to the inverted scale of $\mathcal{I}_{\lambda_0}$ to obtain information on cyclicity. The critical Minkowski order signals the moment the comparability occurs.
By Theorem~\ref{chebsaus}, if $f_{\lambda_0}(x)\simeq x^k$, as $x\to 0$, $k\geq 2$, then the critical Minkowski order is equal to $2k$, $m(g_{\lambda_0},\I_{\lambda_0})= 2k$; if $f_{\lambda_0}\simeq x^k(-\log x)$, $k\geq 2$, then the critical order is equal to $2k-1$, $m(g_{\lambda_0},\I_{\lambda_0})= 2k-1$. Consequently, the cyclicity of the loop is less than or equal to $2k$, $2k-1$ respectively. Equality can be obtained if the unfolding $(X_\lambda)$ is general enough so that the regularity condition from Theorem~\ref{chebsaus} is satisfied.

\subsubsection{Non-differentiable case, Hamiltonian hyperbolic 2-cycle with constant hyperbolicity ratios.}\label{poli}\

Suppose $(X_\lambda)$ is an unfolding of a Hamiltonian hyperbolic 2-cycle of the field $X_{\lambda_0}$ in which at least one separatrix remains unbroken. Such a situation appears for polycycles having part of the line at infinity as the unbroken separatrix. Suppose that the ratios of hyperbolicity of both saddles $S_1$ and $S_2$ at $\lambda=\lambda_0$ are $r_1(\lambda_0)=r_2(\lambda_0)=1$. The breaking parameter of the breaking separatrix is denoted by $\beta_1(\lambda)$ $(\beta_1(\lambda_0)=0)$. By $x\in(0,\delta)$ we parametrize the (inner side) of the transversal to the stable manifold of one of the saddles, and we choose the saddle whose stable manifold is on the unbroken separatrix, say $S_1$. In search of cyclicity, instead of considering fixed points of Poincar\' e maps $f_\lambda$ on $(0,\delta)$, for simplicity we can consider zero points on $(0,\delta)$ of the family of maps  
$$
\Delta_\lambda(x)=D_2^\lambda\circ R_2^\lambda(x)-R_1^\lambda\circ D_1^\lambda(x),
$$
where $D_1$ and $D_2$ represent Dulac maps of the saddle $S_1$, $R_1$ is the regular map along the broken separatrix and $R_2$ the regular map along the unbroken separatrix. Obviously, $R_1^\lambda(0)$ equals the breaking parameter of the separatrix, $\beta_1(\lambda)$, and $R_2^\lambda(0)=0$, on the unbroken separatrix. Using the developments of Dulac maps from \cite{roussarie}, just like in the above example of the saddle loop, $\Delta_\lambda$ has the uniform development in the monomials from the two Chebyshev scales, $\mathcal{I}_\lambda^1$ and $\mathcal{I}_\lambda^2$ below, since the developments for $D_2^\lambda\circ R_2^\lambda(x)$ and $R_1^\lambda\circ D_2^\lambda(x)$ are subtracted:
\begin{align*}
\mathcal{I}_{\lambda}^1&=\{1,x\omega_1(x,\alpha_1(\lambda)),x,x^2\omega_1^2(x,\alpha_1(\lambda)),x^2\omega_1(x,\alpha_1(\lambda))),x^2,\\
&\qquad x^3\omega_1^3(x,\alpha_1(\lambda)),x^3\omega_1^2(x,\alpha_1(\lambda)),x^3\omega_1(x,\alpha_1(\lambda)),x^3,\ldots\},\\
\mathcal{I}_{\lambda}^2&=\{1,x\omega_2(x,\alpha_2(\lambda)),x,x^2\omega_2^2(x,\alpha_2(\lambda)),x^2\omega_2(x,\alpha_2(\lambda))),x^2,\\&\qquad x^3\omega_2^3(x,\alpha_2(\lambda)),x^3\omega_2^2(x,\alpha_2(\lambda)),x^3\omega_2(x,\alpha_2(\lambda)),x^3,\ldots\}.
\end{align*}
For the development, see e.g. \cite{caubergh}.
For each monomial $x^k\omega_{i}^l$, $k\geq 1,\ l\geq 0$, it necessarily holds that $k\geq l$, $\alpha_1(\lambda)=1-r_1(\lambda)$, $\alpha_2(\lambda)=1-r_2(\lambda)$, and  $\omega_1$ and $\omega_2$ are as defined in the section above. They are known as independent compensators, since they are not comparable by flatness, and thus disable the concatenation of $\mathcal{I}_{\lambda}^1$ and $\mathcal{I}_{\lambda}^2$ in one Chebyshev scale. 

If we additionally suppose that the ratios of hyperbolicity $r_1(\lambda_0)$ and $r_2(\lambda_0)$ are preserved throughout the unfolding, then we have $$\omega_1(x,\alpha_1(\lambda))=\omega_2(x,\alpha_2(\lambda))=-\log x,\text{ for all $\lambda$}.$$
In this case the Chebyshev scale in which all of $\Delta_\lambda$ from the unfolding $(X_\lambda)$ have the uniform development is
\begin{align*}
\mathcal{I}=&\{1,x,x^2(-\log x)^2,x^2(-\log x),x^2,\\
&\qquad x^3(-\log x)^3,x^3(-\log x)^2,x^3(-\log x),x^3,\ldots\}.
\end{align*}
To see the number of limit cycles that can arise in the unfolding of the hyperbolic 2-cycle in $X_{\lambda_0}$, by Theorem~\ref{chebsaus}, the length of $\varepsilon$-neighborhood of the discrete orbit of $\Delta_{\lambda_0}(x)$ should be computed numerically and compared to the inverted scale of $\mathcal{I}$. The index $i$ for which $|A_\varepsilon|\simeq u_i^{-1}(\varepsilon)$ holds, in the article called critical Minkowski order $m(\Delta_{\lambda_0},\mathcal{I})$, represents an upper bound on the number of limit cycles that can appear in the unfolding $(X_\lambda)$ of $X_{\lambda_0}$. 

Let us note here that this upper bound is not necessarily optimal, since the scale $\mathcal{I}$ is taken to be the largest possible for a given problem. Better results on upper bound are obtained in \cite{dumortier}, using asymptotic developments of Abelian integrals, and in \cite{gavrilov}. In \cite{gavrilov}, the upper bound is given in terms of characteristic numbers of holonomy maps, not using asymptotic development of the Poincar\' e map.
\subsection{Abelian integrals}\ 

\emph{Abelian integrals} on $1$-cycles are integrals of polynomial $1-$form $\omega$ along the continuous family of cycles of the polynomial Hamiltonian field, lying in the level sets of the Hamiltonian $H$, $\delta_t\subset \{H=t\}$,
$$
I_{\omega}(t)=\int_{\delta_t}\omega.
$$

Suppose that the value $t=0$ is a critical value for the Hamiltonian field in $\R^2$, such that there exists $d>0$ and a continuous family of cycles belonging to the level sets $\{H=t\}$,\ $t\in (0,d)$. Then we have the following asymptotic expansion at $t=0$ (see Arnold \cite{arnold}, Ch. 10, Theorem 3.12 and Zoladek \cite{zoladek}, Ch. 5):
\begin{equation}\label{e}
I_{\omega}(t)=\sum_{\alpha}\sum_{k=0}^{1}a_{k,\alpha}(\omega) t^{\alpha}(-\log t)^k,
\end{equation}
where $\alpha$ runs over an increasing sequence of nonnegative rational numbers depending only on Hamiltonian $H(x,y)$ (such that $e^{2\pi i \alpha}$ are eigenvalues of monodromy operator of the singular value) and $a_{k,\alpha}\in\mathbb{R}$.
\medskip

Obviously, the corresponding Chebyshev scale for this problem is:
\begin{equation}
\mathcal{I}=\{t^{\alpha_1}(-\log t),t^{\alpha_1},
t^{\alpha_2}(-\log t),t^{\alpha_2},\ldots,
t^{\alpha_m}(-\log t),t^{\alpha_m},\ldots\}.
\end{equation}

It makes sense to compute critical Minkowski order of the orbit $S^g(x_0)$, comparing the length of $\varepsilon$-neighborhood of $g(t)=t-I_{\omega_0}(t)$ with the inverted scale of $\mathcal{I}$, to obtain the multiplicity of an Abelian integral in a family of integrals.\\

In $\mathbb{R}^2$, Abelian integrals have been used as a tool for determining cyclicity of vector fields, considering them as perturbation of Hamiltonian field (for details and examples see e.g. Zoladek \cite{zoladek}, Ch. 6). 

Suppose we have the following $\eta-$perturbed Hamiltonian system,
\begin{eqnarray}\label{perturb}
\dot{x}&=&\frac{\partial H}{\partial y}+\eta P(x,y,\eta),\nonumber\\
\dot{y}&=-&\frac{\partial H}{\partial x}+\eta Q(x,y,\eta),
\end{eqnarray} 
where $P,\ Q,\ H$ are polynomials and $\eta>0$.

\noindent Let $\omega=Q dx- P dy$ be the polynomial $1-$form defined by $P,\ Q$.

Let $t=0$ be a critical value of Hamiltonian, and let $S$ be a transversal to the family of cycles $(\delta_t\subset\{H=t\})$ on small neighboourhood of $t=0$, parametrized by $t\in[0,d)$. Then (see e.g. Zoladek \cite{zoladek}, Ch. 6) the displacement function on $S$ of the perturbed Hamiltonian field is given by
\begin{equation}\label{app}
f_{\eta}(t)=\eta I_\omega(t)+o(\eta),
\end{equation}
i.e., Abelian integral is the first approximation of the displacement function, for $\eta$ small enough. Here we suppose that $I_\omega(t)$ is not identically equal to zero, i.e. that $\omega$ is not relatively exact.

On some segment $[\alpha,\beta]\subset (0,d)$ away from critical value $t=0$, it is known that the number of zeros of Abelian integral gives an upper bound on the number of zeros of the displacement function $f_{\eta}(t)$ on $[\alpha,\beta]$ of the perturbed system~$(\ref{perturb})$, for $\eta$ small enough (both counted with multiplicities), i.e. on the number of limit cycles born in perturbed system $(\ref{perturb})$ in the area $\bigcup_{t\in[\alpha,\beta]}\delta_t$, for $\eta<\eta_0$ small enough (for this result, see e.g. \cite{spain}, Theorem 2.1.4).

However, the problem arises if we approach the critical value $t=0$ and the result cannot be applied to the whole interval $[0,d)$. In some systems, some limit cycles visible as zeros of displacement function are not visible as zeros of corresponding Abelian integral, because sometimes the approximation $(\ref{app})$ is not good enough. One of the examples is the perturbation of the Hamiltonian field in the neighborhood of the saddle polycycle with 2 or more vertices, see Dumortier, Roussarie \cite{dumortier}. Abelian integrals near hyperbolic  polycycles have an expansion linear in $\log t$, see expansion $(\ref{e})$ or \cite{dumortier}, Proposition 1. On the other hand, see Roussarie \cite{roussarie}, the asymptotic expansion of the displacement function near the saddle polycyle with more than one vertex involves also powers of $\log t$ greater than $1$. 

In the neighborhood of the center singular point and of the saddle loop ($1-$saddle polycycle) of the Hamiltonian field, however, the multiplicity of corresponding Abelian integral gives correct information about cyclicity, see e.g. Dumortier, Roussarie \cite{dumortier}, Theorem 4.  

\bigskip

\subsection{One example out of scope of Theorem~\ref{chebsaus}}\

At the very end, let us note that in the former examples we have used critical Minkowski order which reveals the rate of growth of $\varepsilon$-neighborhood of the orbit generated by Poincar\' e map around limit periodic set to conclude about the cyclicity of the set. The connection is given by Theorem~\ref{chebsaus} through the notion of multiplicity of the fixed point zero which is equal to cyclicity. 

From the assumptions of Theorem~\ref{chebsaus} it is visible that the theorem cannot be applied to displacement functions which are infinitely flat and therefore not comparable to powers (see the paragraph after Definition~\ref{comparable}). We have noticed that this restriction of Theorem~\ref{chebsaus} to functions that are not infinitely flat makes sense in applications. As an example, we can take the accumulation of limit cycles on the saddle-node polycycle, a case which obviously should not meet the conditions of Theorem~\ref{chebsaus} because multiplicity and cyclicity should not turn out finite. It is interesting that in this case the displacement function is infinitely flat, and therefore excluded from Theorem~\ref{chebsaus}. Perhaps this could be the subject of further research.

We state here the result from Il'yashenko \cite{ilja}: 
If a sequence of limit cycles of an analytic vector field converges to a polycycle with saddle-node singular points, then one can select a semitransversal to this polycycle such that the displacement function is not equal to zero, but infinitely flat, for e.g. $f(x)\simeq e^{-\frac{1}{x}}$.
If we have a polycycle with only saddle singular points, then the displacement function cannot be infinitely flat.

\section{Appendix}\label{sec5}

In Appendix we put some observations concerning main results.

\begin{remark}[sublinearity in Theorem~\ref{gensaus}]\label{superlin}
The condition $m>1$ in the lower power condition in Theorem~\ref{gensaus} cannot be weakened. If we take, for example, the function $$f(x)=\frac{x}{-\log x},$$ it obviously satisfies all assumptions of Theorem \ref{gensaus}, except sublinearity: the lower power condition holds only for $m\leq 1$. If we compute $|A_{\varepsilon}(S^g(x_0))|$ for this  function (as it is computed in the proof of Theorem \ref{gensaus} in Section \ref{sec3}), we get that $\frac{|A_{\varepsilon}(S^g(x_0))|}{f^{-1}(\varepsilon)}$ tends to infinity, as $\varepsilon\to 0$, and therefore the conclusion $(\ref{gensausage})$ is not true.

On the other hand, for functions of the form $$f(x)=\frac{x^{1+\alpha}}{-\log x},\ \ \alpha>0,$$ which are obviously sublinear with $m=1+\alpha>1$, the explicit computation shows that $|A_{\varepsilon}(S^g(x_0))|\simeq f^{-1}(\varepsilon)$, as $\varepsilon\to 0$. 
\end{remark}

\begin{remark}[Upper power condition in Theorem~\ref{chebsaus}]\label{upperpower}
The upper power condition on $f$ is needed in Theorem~\ref{chebsaus}, for the if implication to hold, see Lemma~\ref{doubling}.i). As a counterexample, we can take the following Chebyshev scale
$$
\mathcal{I}=\{e^{-\frac{1}{x}},e^{-\frac{1}{2x}},e^{-\frac{1}{3x}},\ldots\},
$$
and,  $f(x)=e^{-\frac{1}{3x}}$, $g$=id$-f$ , which does not satisfy upper power condition.

Obviously, $D_0(f)(0)=D_1(f)(0)=0$ and $D_2(f)(0)>0$, so the multiplicity $\mu_0(g,\mathcal{G})\leq 2$. On the other side, $u_1^{-1}(\varepsilon)\simeq u_2^{-1}(\varepsilon)\simeq u_3^{-1}(\varepsilon)\simeq\ldots\simeq \frac{1}{-\log\varepsilon}$, therefore the critical Minkowski order $m(g,\mathcal{G})$ is infinite. In this case, we are not able to read the multiplicity neither from the critical Minkowski order nor from behavior of the length of the $\varepsilon-$neighborhood. 
\end{remark}

\bigskip

\begin{example}[Non-flat, non weakly comparable to powers function]\label{constr}
We construct a non infinitely flat function $f$ that does not satisfy $x (\log f)'(x)\leq M$ for any $M>0$, just to show that, for functions of interest, non-flatness is not  equivalent to weak comparability to powers.

The main idea is to bound the function from above and from below with $x^{\alpha+1}$ and $x^\alpha$, $\alpha>1$, therefore it cannot be infinitely flat.\\ Next we need to make sure that on some intervals approaching zero its logarithmic growth is faster than the logarithmic growth of $x^\alpha$.

We construct the function $f$ in logarithmic chart, i.e. we construct function $g(x)=\log f(x)$ on some segment $(0,d)$.\\ Let $h_1(x)=\log (x^{\alpha})=\alpha\log x$ and let $h_2(x)=\log (x^{\alpha+1})=(\alpha+1)\log x$. Let us take $x_1$ close to $x=0$. The segment $I_1$ connects the points $(x_1,h_1(x_1))$ and $(x_1/2, h_2(x_1/2))$. Now we choose point $x_2$ such that $h_1(x_2)<h_2(x_1/2)$ (to ensure that $f$ is increasing). We get segment $I_2$ by connecting $(x_1/2,h_2(x_1/2))$ and $(x_2,h_1(x_2))$. We repeat the procedure with $x_2$ instead of $x_1$ to get segments $I_3$ and $I_4$ and, inductively, we get the sequence $(x_n)$ tending to $0$, as $n\to\infty$, and the sequence of segments $(I_n)$ which are becoming perpendicular very quickly, see Figure \ref{fig}. 

The graph of our function $g$ will be the union of the segments $\bigcup_{n=1}^{\infty}I_n$, smoothened on edges. Obviously $f(x)=e^{g(x)}$ is bounded by $x^{\alpha+1}$ and $x^\alpha$. Nevertheless, if we take the sequence $(y_n)$ such that $x_n/2<y_n<x_n$, we compute
$$
g'(y_n)\cdot y_n=\frac{m\log x_n-(m+1)\log \frac{x_n}{2}}{x_n/2}\cdot y_n\ \simeq\ -\log x_n, \text{\ \ as $n\to\infty$},
$$ 
and, thus, for the sequence $(y_n)$ tending to $0$ it holds $g'(y_n)y_n\to \infty$, as $n\to\infty$, a contradiction to $x g'(x)\leq M$.
\end{example}

\begin{figure}
\centering
\includegraphics[scale=0.6]{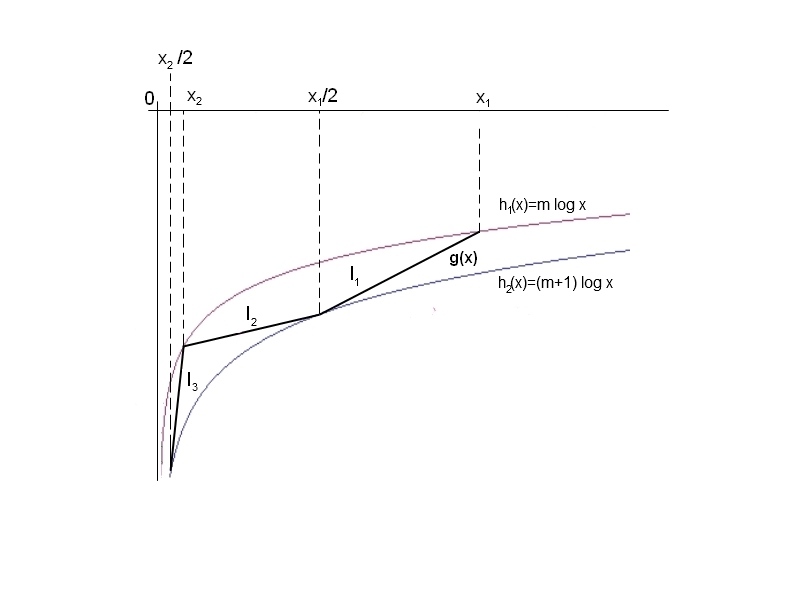}
\vspace{-2.5cm}
\caption{\scriptsize Function $g(x)=\log f(x)$ from Construction \ref{constr}.}
\label{fig}
\end{figure}

{\bf Acknowledgements:} {We would like to thank Darko \v Zubrini\'c for fruitful discussions.}


\begin{thebibliography} {12}
 
\bibitem{arnold} V.I.Arnold, S.M. Gusein-Zade, A.N.Varchenko, {\em Singularities of Differentiable Maps, Volume II} (1988), Birkh\" auser, Boston-Basel-Berlin.
\bibitem{caubergh} M. Caubergh, R. Roussarie, {\em Relations between Abelian integrals and limit cycles}, Normal Forms, Bifurcations and Finiteness Problems in Differential Equations, NATO Science Series, Vol 137 (2004).
\bibitem{spain} C. Christopher, C. Li, S. Yakovenko, {\em Advanced Course on Limit Cycles of Differential Equations, Notes of the Course, June 26 to July 8, 2006}, Centre de Recerca Matem\` atica, Bellaterra (Barcelona).
\bibitem{dumortier} F. Dumortier, R. Roussarie, {\em Abelian integrals and limit cycles}, J. Differ. Equations, Vol 227 (1) (2006), 116-165.
\bibitem{elezovic} N. Elezovi\' c, D. \v Zubrini\' c, V. \v Zupanovi\' c, {\em Box dimension of trajectories of some discrete dynamical systems}, Chaos Solitons Fractals, Vol 34(2) (2007), 244-252. 
\bibitem{falconer} K. Falconer, {\em Fractal geometry: mathematical foundations and applications} (1990), John Wiley  and sons Ltd., Chichester.
\bibitem{fliess} M. Fliess, J. Rudolph, {\em Corps de Hardy et observateurs asymptotiques locaux pour syst\` emes diff\' erentiellment plats}, C.R. Acad. Sci. Paris, t.324, S\' erie II b (1997), 513-519.
\bibitem{gavrilov} L. Gavrilov, {\em On the number of limit cycles which appear by perturbation of Hamiltonian two-saddle cycles of planar vector fields}, Bull. Braz. Math. Soc. (N.S.) 42, No. 1 (2011), 1-23. 
\bibitem{helapidus} C.Q. He, M. L. Lapidus, {\em Generalized Minkowski content, spectrum of fractal drums,
fractal strings and the Riemann zeta-function}, Mem. Amer. Math. Soc. 127 (1997), no. 608.
\bibitem{ilja} Yu. S. Il'yashenko, {\em Limit cycles of polynomial vector fields with nondegenerate singular points on the real plane}, Funct. Anal. Appl., Vol 18 (1984), 199-209. 
\bibitem{IY} Yu. S. Il'yashenko, S. Yakovenko, {\em Lectures on analytic differential equations. Graduate Studies in Mathematics}, 86. American Mathematical Society, Providence, RI, 2008. xiv+625 pp.
\bibitem{J} P. Joyal, {\em Un th\'eor\`eme de pr\'eparation pour fonctions \ ˆ d\'eveloppement Tch\'ebych\'evien. (French) [A preparation theorem for functions which admit a Chebyshev expansion]}, Ergodic Theory Dynam. Systems 14 (1994), no. 2, 305Ð329.
\bibitem{mardesic} P. Marde\v si\' c, {\em Chebyshev systems and the versal unfolding of the cusp of order $n$} (1998), Hermann, \' Editeurs des Sciences et des Arts, Paris.
\bibitem{hardy} M. Rosenlicht, {\em The rank of a Hardy field}, Trans. Am. Math. Soc., Vol 280(1983), 659-671. 
\bibitem{roussarie} R. Roussarie, {\em Bifurcations of planar vector fields and Hilbert's sixteenth problem} (1998), Birkh\" auser Verlag, Basel.
\bibitem{tricot} C. Tricot, {\em Curves and fractal dimension} (1993), Springer-Verlag, Paris. 
\bibitem{zoladek} H. \. Zoladek, {\em The Monodromy Group} (2006), Birkh\" auser Verlag, Basel. 
\bibitem{zuzu} D. \v Zubrini\'c, V. \v Zupanovi\'c, {\em Fractal analysis of spiral trajectories of some
planar vector fields}, Bulletin des Sciences Math\'ematiques, 129/6 (2005), 457-485.
\bibitem{belg} D.\   \v Zubrini\'c, V.\   \v Zupanovi\'c, {\em Poincar\'e map in fractal analysis of 
spiral trajectories of planar vector fields}, Bull.\   Belg.\   Math.\   Soc.\   Simon Stevin, 15(2008) 947-960.

\end{thebibliography}
\end{document}